\documentclass{amsart}
\usepackage{amssymb}
\usepackage{amsmath}
\usepackage{amsfonts}

\newtheorem{theorem}{Theorem}[section]
\theoremstyle{plain}

\newtheorem{corollary}[theorem]{Corollary}

\newtheorem{definition}[theorem]{Definition}
\newtheorem{example}[theorem]{Example}

\newtheorem{lemma}[theorem]{Lemma}

\newtheorem{proposition}[theorem]{Proposition}

\begin{document}
\title[]{Intra-regular Abel-Grassmann's groupoids}
\subjclass[2010]{20M10, 20N99}
\author{}
\maketitle

\begin{center}
\textbf{Wieslaw A. Dudek}

Institute of Mathematics and Computer Science

Wroclaw University of Technology, 50-370 Wroclaw, Poland

Wieslaw.Dudek@pwr.wroc.pl

\textbf{Madad Khan and Nasir Khan}

Department of Mathematics,

COMSATS Institute of Information Technology,

Abbottabad, Pakistan

madadmath@yahoo.com, nasirmaths@yahoo.com
\end{center}

\bigskip

\textbf{Abstract.} We characterize intra-regular Abel-Grassmann's groupoids by the properties of their ideals and $(\in ,\in\!\vee q_{k})$-fuzzy ideals of various types.

\textbf{Keywords}. $AG$-groupoid, left ideal, bi-ideal, quasi-ideal, fuzzy ideal, $(\in,\in\!\vee q_{k})$-fuzzy ideal.

\bigskip

\section{Introduction}

\textit{Left almost semigroups} \cite{Pro2}, abbreviated as \textit{$LA$-semigroups}, are an algebraic structure midway between groupoids and commutative semigroups with wide applications in the theory of flocks \cite{Vag}. Certaine (cf. \cite{Cert}) applied idempotent flocks to affine geometry, as does Baer in his book \textit{Linear Algebra and Projective Geometry}.

$LA$-semigroups are also called \textit{Abel-Grssmann's groupoids} or \textit{$AG$-groupoids}. This structure is closely related with a commutative semigroup because if an $LA$-semi\-group contains right identity then it becomes a commutative monoid. An $LA$-semigroup with left identity is a semilattice \cite{MKh}. Although the structure is non-associative and non-commutative, nevertheless, it posses many interesting properties which we usually found in associative and commutative algebraic structures. For example, congruences of some $AG$-groupoids have very similar properties as congruences of semigroups (cf. \cite{DudGig} and \cite{Pro}). Moreover, any locally associative $AG$-groupoid $S$ with left identity is uniquely expressible as a semilattice of archimedean components. The archimedian components of $S$ are cancellative if and only if $S$ is separative. Such $AG$-groupoid can be embedded into a union of groups \cite{MIq}. On the other hand on some $AG$-groupoids one can define the structure of an abelian group \cite{Mus}.

Usually the models of real world problems in almost all disciplines like engineering, medical sciences, mathematics, physics, computer science, management sciences, operations research and artificial intelligence are mostly full of complexities and consist of several types of uncertainties while dealing them in several occasion. To overcome these difficulties of
uncertainties, many theories have been developed such as rough sets theory, probability theory, fuzzy sets theory, theory of vague sets, theory of soft ideals and the theory of intuitionistic fuzzy sets. Zadeh \cite{Zad} discovered the relationships of probability and fuzzy set theory which has appropriate approach to deal with uncertainties. Many authors have applied
the fuzzy set theory to generalize the basic theories of Algebra (cf. \cite{Bha2, DudSha, jun2, jun3, Ros, Shabir1}). Mordeson et al. \cite{Mordeson}has discovered the grand exploration of fuzzy semigroups, where theory of fuzzy semigroups is explored along with the applications of fuzzy semigroups in fuzzy coding, fuzzy finite state mechanics and fuzzy languages and the use of fuzzyfication in automata and formal language has widely been explored.

In this paper we present various characterizations of intra-regular $AG$-groupoids with left identity by the properties of their left ideals and bi-ideals. We also present characterizations by fuzzy left ideals and fuzzy bi-ideals of some special types.

\section{Preliminaries}

In this section we remind basic facts which will be need later. For simplicity a multiplication will be denoted by juxtaposition. Dots we will use only to avoid repetitions of brackets. For example, the formula $((xy)(zy))(xz)=(x(yy))z$ will be written as $(xy\cdot zy)\cdot xz=xy^2\cdot z$.

A groupoid $(S,\cdot)$ is called \textit{$AG$-groupoid}, if it satisfies the  \textit{left invertive law}: 
\begin{equation}  \label{e1}
ab\cdot c=cb\cdot a.
\end{equation}

Each $AG$-groupoid satisfies the \textit{medial law}: 
\begin{equation}  \label{e2}
ab\cdot cd=ac\cdot bd.
\end{equation}

Moreover, a \textit{unitary} $AG$-groupoid, i.e., an $AG$-groupoid with a left identity, satisfies the \textit{paramedial law}: 
\begin{equation}  \label{e3}
ab\cdot cd=db\cdot ca.
\end{equation}

In this case also holds 
\begin{equation}  \label{e4}
a\cdot bc=b\cdot ac ,
\end{equation}
\begin{equation}  \label{e5}
ab\cdot cd=dc\cdot ba .
\end{equation}
for all $a,b,c,d\in S$.

Let $S$ be an $AG$-groupoid. By \textit{$AG$-subgroupoid} of $S$ we mean a nonempty subset $A$ of $S$ such that $A^{2}\subseteq A$. By a \textit{left $( $right$)$ ideal} of $S$ we mean a nonempty subset $B$ of $S$ such that $SB\subseteq B$ (resp. $BS\subseteq B$). A \textit{two-sided ideal} or simply an \textit{ideal} of $S$ is a subset which is both a left and a right ideal of $S$. In a unitary $AG$-groupoid a right ideal is a left ideal, and consequently -- an ideal, but there are left ideals which are not a right ideal. Note also that the intersection of two left (right) ideals may be the empty set but the intersection of two ideals always is nonempty. Indeed, if $A,B$ are ideals of $S$ and $a\in A$, $b\in B$, then $ab\in A\cap B$. Obviously $A\cap B$ is an ideal of $S$. In a surjective $AG$-groupoid, i.e., in an $AG$-groupoid $S$ with the property $S=S^2$, each right ideal is a left ideal. The converse is not true.

By a \textit{generalized bi-ideal} (\textit{generalized interior ideal}) of $S$ we mean a nonempty subset $I$ of $S$ such that $(IS)I\subseteq I$ (resp. $(SI)S\subseteq I$). A generalized bi-ideal (generalized interior ideal) of $S$ which is an $AG$-subgroupoid is called a \textit{bi-ideal} (respectively, \textit{interior ideal}) of $S$. If $a^{2}\in A$ implies $a\in A$ for all $a\in S$, then we say that a subset $A\subseteq S$ is \textit{semiprime}.

A fuzzy subset $f$ of a set $S$ is described as an arbitrary function $f:S\longrightarrow \lbrack 0,1]$, where $[0,1]$ is the usual closed interval of real numbers. A fuzzy subset $f$ of $S$ of the form  
\begin{equation*}
f(z)=\left\{\begin{array}{lll}
t\in (0,1] & \mathrm{if } & z=x, \\ 
0 & \mathrm{if } & z\ne x
\end{array}
\right.
\end{equation*}
is called the \textit{fuzzy point} and is denoted by $x_t$. A fuzzy point $x_t$ is said to \textit{belong} to a fuzzy set $f,$ written as $x_t\in f,$ if $f(x)\geq t$, and \textit{quasi-coincident} with $f,$ written as $x_tq_kf, $ if $f(x)+t+k>1$, where $k$ is a fixed element of $[0,1)$. The symbol $x_t\in\vee q_kf$ means that $x_t\in f$ or $x_tq_kf$.

For any two fuzzy subsets $f$ and $g$ of $S$, $f\leq g$ means that, $f(a)\leq
g(a)$ for all $a\in S$. The symbols $f\wedge_k g$, $f\vee_k g$ and $f\circ_k g$, where $k\in [0,1)$,
denote the fuzzy subsets of $S$:

\medskip
$(f\wedge_k g)(a) =\min\{f(a),\,g(a),\,\frac{1-k}{2}\}=f(a)\wedge_k g(a)$,

\medskip
$(f\vee_k g)(a) =\max\{f(a),\,g(a),\,\frac{1-k}{2}\}=f(a)\vee_k g(a)$,

\medskip
$
\left(f\circ_k g\right)(a)=\left\{\!\!\!
\begin{array}
[c]{cl}
\bigvee \limits_{a=pq}\! \left \{f(p)\wedge_k g(q)\right\}& \text{ if
}\  \exists p,q\in S\text{ such that }a=pq,\\
0 & \text{ otherwise}%
\end{array}
\right.
$

\noindent
for all $a\in S$.

The set $S$ can be considered as a fuzzy subset of $S$ such that $S(a)=1$ for all $a\in S$. So, the fuzzy subset 
$(f\wedge_k S)(a)$ will be denoted as $f_k(a)$.

\medskip
A fuzzy subset $f$ of an $AG$-groupoid $S$ is called:

$-$ an \textit{$(\in ,\in\!\vee q_{k})$-fuzzy subgroupoid} if for all $x,y\in S$ and $r,t\in (0,1]$

$\ \ \ x_{r},y_{t}\in f\Rightarrow (xy)_{\min \{r,t\}}\in\!\vee q_{k}f$,

$-$ an \textit{$(\in ,\in\!\vee q_{k})$-fuzzy left ideal} if for all $x,y\in S$ and $t\in (0,1]$

$\ \ \ y_{t}\in f\Rightarrow (xy)_{t}\in\!\vee q_{k}f$,

$-$ an \textit{$(\in ,\in\!\vee q_{k})$-fuzzy right ideal} if for all $x,y\in S$ and $t\in (0,1]$

$\ \ \ x_{t}\in f\Rightarrow (xy)_{t}\in\!\vee q_{k}f$,

$-$ an \textit{$(\in ,\in\!\vee q_{k})$-fuzzy two-sided ideal} if it is both left and right $(\in ,\in\!\vee q_{k})$-fuzzy

$\ \ \ $ideal,

$-$ an \textit{$(\in ,\in\!\vee q_{k})$-fuzzy generalized bi-ideal} if for all $x,y,z\in S$ \ and \ $r,t\in (0,1]$

$\ \ \ x_{t},z_{r}\in f\Rightarrow (xy\cdot z)_{\min \{r,t\}}\in\!\vee q_{k}f $,

$-$ an \textit{$(\in ,\in\!\vee q_{k})$-fuzzy generalized interior ideal} if for all $x,y,z\in S$ and $t\in (0,1]$

$\ \ \ y_{t}\in f\Rightarrow (xy\cdot z)_{t}\in\!\vee q_{k}f$,

$-$ an \textit{$(\in ,\in\!\vee q_{k})$-fuzzy semiprime} if $f(a)\geq f_k(a^{2})$ \ for all $a\in S$.

\medskip

An $(\in ,\in\!\vee q_{k})$-fuzzy subgroupoid which is an $(\in ,\in\!\vee q_{k})$-fuzzy generalized bi-ideal (generalized interior ideal) is called an \textit{$(\in ,\in\!\vee q_{k})$-fuzzy bi-ideal} (respectively, an \textit{$(\in ,\in\!\vee q_{k})$-fuzzy interior ideal}).

\medskip
Similarly as in the case of semigroups (for details see \cite{Shabir2}) we can prove the following two propositions.

\begin{proposition}
\label{P2.2} A fuzzy subset $f$ of an $AG$-groupoid $S$ is

\begin{enumerate}
\item[$(i)$] an $(\in ,\in\!\vee q_{k})$-fuzzy subgroupoid if and only if $\,f(xy)\geq f(x)\wedge_k f(y)$,

\item[$(ii)$] an $(\in ,\in\!\vee q_{k})$-fuzzy left ideal if and only if $\,f(xy)\geq f_k(y)$,

\item[$(iii)$] an $(\in ,\in\!\vee q_{k})$-fuzzy right ideal if and only if $\,f(xy)\geq f_k(x)$,

\item[$(iv)$] an $(\in ,\in\!\vee q_{k})$-fuzzy bi-ideal if and only if $\,f(xy)\geq f(x)\wedge_k f(y)\,$ and

$f(xy\cdot z)\geq f(x)\wedge_k f(z)$,

\item[$(v)$] an $(\in ,\in\!\vee q_{k})$-fuzzy generalized bi-ideal if and only if $\,f(xy\cdot z)\geq f(x)\wedge_k f(z)$,

\item[$(vi)$] an $(\in ,\in\!\vee q_{k})$-fuzzy interior ideal if and only if $\,f(xy)\geq f(x)\wedge_k f(y)$ and $\,f(xy\cdot z)\geq f_k(y)$,

\item[$(vii)$] an $(\in ,\in\!\vee q_{k})$-fuzzy generalized interior ideal if and only if $\,f(xy\cdot z)\geq f_k(y)$.
\end{enumerate}

for all $x,y,z\in S$.
\end{proposition}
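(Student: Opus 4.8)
The plan is to reduce all seven equivalences to a single threshold-translation principle. Once the bookkeeping around $\in\vee q_k$ is settled, the seven statements are formally identical, and it is worth noting at the outset that no $AG$-groupoid axiom is used: each product $xy$ or $xy\cdot z$ is treated as an opaque element of $S$, so the argument is verbatim the one for semigroups referenced in \cite{Shabir2}. The engine is the following elementary lemma, which I would isolate first. \emph{For fixed $u,v\in S$, the condition ``$v_t\in f\Rightarrow u_t\in\vee q_k f$ for all $t\in(0,1]$'' is equivalent to $f(u)\geq f_k(v)=\min\{f(v),\frac{1-k}{2}\}$.}

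For the forward implication I would split on the size of $t$. Assuming $f(u)\geq f_k(v)$ and $v_t\in f$ (so $f(v)\geq t$): if $t\leq\frac{1-k}{2}$ then $f_k(v)\geq\min\{t,\frac{1-k}{2}\}=t$, whence $f(u)\geq t$ and $u_t\in f$; if $t>\frac{1-k}{2}$ then $f(v)\geq t>\frac{1-k}{2}$ forces $f_k(v)=\frac{1-k}{2}$, so $f(u)\geq\frac{1-k}{2}$ and thus $f(u)+t+k>\frac{1-k}{2}+\frac{1-k}{2}+k=1$, giving $u_tq_kf$. In either case $u_t\in\vee q_k f$. For the converse I would argue by contraposition: if $f(u)<f_k(v)$, put $t=f_k(v)\leq\frac{1-k}{2}$; then $f(v)\geq t$ gives $v_t\in f$, while $f(u)<t$ rules out $u_t\in f$, and $f(u)+t+k<2t+k\leq(1-k)+k=1$ rules out $u_tq_kf$, so $u_t\notin\vee q_k f$, contradicting the hypothesis. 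Parts $(ii)$, $(iii)$ and $(vii)$ then drop out by specializing $(u,v)$ to $(xy,y)$, $(xy,x)$ and $(xy\cdot z,y)$ respectively.

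For the two-variable statements I would prove the companion lemma: ``$v_r,w_s\in f\Rightarrow u_{\min\{r,s\}}\in\vee q_k f$ for all $r,s$'' is equivalent to $f(u)\geq f(v)\wedge_k f(w)=\min\{f(v),f(w),\frac{1-k}{2}\}$. The proof is the same case split with $m=\min\{r,s\}$ in the role of $t$ for the forward direction, and with the test value $r=s=\min\{f(v),f(w),\frac{1-k}{2}\}$ for the contrapositive. This yields $(i)$ with $(u,v,w)=(xy,x,y)$ and $(v)$ with $(u,v,w)=(xy\cdot z,x,z)$. Finally $(iv)$ and $(vi)$ require no new work: by definition a fuzzy bi-ideal is a fuzzy subgroupoid that is also a fuzzy generalized bi-ideal, so $(iv)$ is the conjunction of $(i)$ and $(v)$, and likewise $(vi)$ is the conjunction of $(i)$ and $(vii)$.

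The only genuinely delicate point, and the step I expect to need the most care, is the converse direction of each lemma: one must choose the critical test value exactly at the truncation $\frac{1-k}{2}$ and check that \emph{both} alternatives $\in$ and $q_k$ fail there. The inequality $2t+k\leq 1$, valid for $t\leq\frac{1-k}{2}$, is precisely what defeats quasi-coincidence, so keeping the strict versus non-strict inequalities straight is where an error would most easily creep in. Everything else is routine substitution.
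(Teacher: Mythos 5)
Your proof is correct, and it is essentially the argument the paper has in mind: the paper gives no proof of Proposition \ref{P2.2} at all, merely remarking that it goes ``similarly as in the case of semigroups'' with a citation to \cite{Shabir2}, and your observation that no $AG$-groupoid axiom is used is exactly why that remark is justified. Your threshold-translation lemma (with the test value $t=f_k(v)\leq\frac{1-k}{2}$ defeating both $\in$ and $q_k$ in the converse) is the standard engine behind all seven parts, and your reduction of $(iv)$ and $(vi)$ to conjunctions of the subgroupoid case with $(v)$ and $(vii)$ matches the paper's definitions verbatim.
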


\begin{corollary}\label{C2.6}
In a unitary $AG$-groupoid each $(\in,\in\!\vee q_{k})$-fuzzy right ideal is an $(\in,\in\!\vee q_{k})$-fuzzy left ideal.
\end{corollary}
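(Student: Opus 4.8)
The plan is to prove this as the fuzzy counterpart of the crisp fact, already recorded in the preliminaries, that in a unitary $AG$-groupoid every right ideal is a left ideal. The crisp argument runs as follows: if $e$ is the left identity and $x,y\in S$, then $x=ex$, so by the left invertive law \eqref{e1} one has $xy=(ex)y=(yx)e$; thus a product $xy$ can always be rewritten as $(yx)e$, a product whose left factor is $yx$. I would build the fuzzy proof directly on this identity.

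First I would invoke Proposition \ref{P2.2}: being an $(\in,\in\!\vee q_k)$-fuzzy right ideal means $f(ab)\geq f_k(a)$ for all $a,b\in S$ (part $(iii)$), while being an $(\in,\in\!\vee q_k)$-fuzzy left ideal means $f(ab)\geq f_k(b)$ for all $a,b\in S$ (part $(ii)$). So, assuming $f$ is a fuzzy right ideal, the whole task reduces to verifying the single inequality $f(xy)\geq f_k(y)$ for arbitrary $x,y\in S$.

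Using the rewriting $xy=(yx)e$ and the right-ideal inequality applied to the product $(yx)\cdot e$, I would obtain $f(xy)=f\bigl((yx)e\bigr)\geq f_k(yx)$. Applying the right-ideal inequality a second time to the product $y\cdot x$ gives $f(yx)\geq f_k(y)$. It then remains to chain these through the truncation $f_k(a)=\min\{f(a),\frac{1-k}{2}\}$: since $f(yx)\geq f_k(y)$ and $f_k(y)\leq\frac{1-k}{2}$, we have $f_k(yx)=\min\{f(yx),\frac{1-k}{2}\}\geq\min\{f_k(y),\frac{1-k}{2}\}=f_k(y)$, whence $f(xy)\geq f_k(yx)\geq f_k(y)$, as required, and Proposition \ref{P2.2}$(ii)$ finishes the proof.

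The only place that demands care, and the step I expect to be the main obstacle, is the last one: keeping track of the cut-off $\frac{1-k}{2}$ hidden inside the operator $f_k$. A direct passage from $f(xy)\geq f(y)$ would be false in general, and the correct statement survives precisely because $f_k(y)$ never exceeds $\frac{1-k}{2}$, so truncating $f(yx)$ at that level cannot push the bound below $f_k(y)$. Once this truncation bookkeeping is handled, the two applications of Proposition \ref{P2.2}$(iii)$ together with the identity $xy=(yx)e$ close the argument.
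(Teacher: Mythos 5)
Your proof is correct and follows essentially the same route as the paper's: the paper writes $f(xy)=f(ee\cdot xy)=f(yx\cdot ee)\geq f_k(yx)\geq f_k(y)$, i.e., it also rewrites $xy$ as a product with $yx$ as the left factor and applies the right-ideal inequality twice, exactly as you do with $xy=(yx)e$. Your explicit handling of the truncation $f_k(yx)\geq f_k(y)$ via the cut-off $\frac{1-k}{2}$ is the step the paper leaves implicit, and you justify it correctly.
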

\begin{proof}
Indeed, $f(xy)=f(ee\cdot xy)=f(yx\cdot ee)\geq f_k(yx)\geq f_k(y) .$
\end{proof}

All such fuzzy subsets can be characterized by their levels, i.e., subsets of the form $U(f,t)=\{x\in S:f(x)\geq t\}$. Namely as a simple consequence of the transfer principle for fuzzy subsets (cf. \cite{kondo}) we obtain

\begin{proposition}
\label{P2.3} A fuzzy subset $f$ of an $AG$-groupoid of $S$ is its an $(\in,\in\!\vee q_{k})$-fuzzy subgroupoid $($left, right, interior ideal$)$ if and only if for all $0<t\leq\frac{1-k}{2}$ each nonempty level $U(f,t)$ is a subgroupoids $($left, right, interior ideal$)$ of $S$.
\end{proposition}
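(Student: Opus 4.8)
The plan is to reduce the statement to the pointwise characterizations recorded in Proposition~\ref{P2.2}, and then to pass back and forth between those inequalities and the closure of level subsets $U(f,t)$. I would treat all four cases (subgroupoid, left ideal, right ideal, interior ideal) in parallel, since each defining inequality has the shape $f(\text{product}) \geq (\text{a } \wedge_k \text{ or } f_k \text{ expression in the factors})$, and in every case the right-hand side is capped by $\frac{1-k}{2}$. This cap is exactly what makes the restriction $0 < t \leq \frac{1-k}{2}$ the natural range for the levels.

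For the forward direction, I would assume the relevant inequality from Proposition~\ref{P2.2} and fix $t$ with $0 < t \leq \frac{1-k}{2}$ such that $U(f,t) \neq \emptyset$. Taking factors inside $U(f,t)$ (for instance $x,y \in U(f,t)$ in the subgroupoid case, or $y \in U(f,t)$ and arbitrary $x \in S$ in the left-ideal case), each such factor satisfies $f(\cdot) \geq t$. Feeding these into the inequality and using $\min\{t, \tfrac{1-k}{2}\} = t$, which holds precisely because $t \leq \frac{1-k}{2}$, yields $f(\text{product}) \geq t$, i.e.\ the product lies in $U(f,t)$. Hence $U(f,t)$ is closed under the appropriate operation and is the corresponding type of ideal.

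For the converse, I would argue directly at the level of a single inequality: given factors (say $x,y$), set $t_0$ equal to the right-hand side of the target inequality, e.g.\ $t_0 = f(x) \wedge_k f(y)$ or $t_0 = f_k(y)$. If $t_0 = 0$ the inequality $f(\text{product}) \geq t_0$ is automatic since $f \geq 0$. If $t_0 > 0$, then $t_0 \leq \frac{1-k}{2}$ by construction, and each factor lies in $U(f,t_0)$; since this level set is nonempty, the hypothesis makes it an ideal of the stated type, so the product lies in $U(f,t_0)$, giving $f(\text{product}) \geq t_0$, which is exactly the inequality of Proposition~\ref{P2.2}. For the interior-ideal case one runs this twice, once for the subgroupoid inequality $f(xy) \geq f(x) \wedge_k f(y)$ and once for $f(xy\cdot z) \geq f_k(y)$, matching the two closure conditions defining an interior ideal.

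The only genuinely delicate point, and the reason the statement is phrased with the cap $\frac{1-k}{2}$ rather than with arbitrary $t$, is the bookkeeping of the threshold: one must ensure that the minimum appearing in $\wedge_k$ and $f_k$ never exceeds $\frac{1-k}{2}$, so that membership in a level set below that threshold is preserved, and separately handle the degenerate case $t_0 = 0$ where no level-set argument is available. Everything else is the routine transfer principle of \cite{kondo}.
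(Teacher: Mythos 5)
Your argument is correct: both directions are handled properly, including the cap $t\le\frac{1-k}{2}$ and the degenerate case $t_0=0$. The paper gives no explicit proof here, deferring to the transfer principle of \cite{kondo} applied to the inequalities of Proposition~\ref{P2.2}, and what you have written is precisely that transfer argument spelled out, so the approach is essentially the same.
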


A similar result is valid for bi-ideals, generalized bi-ideals and generalized interior ideals.

\begin{definition}
\rm{Let $k$ be a fixed element of $[0,1)$. The characteristic function $\left(C_{A}\right)_{k}$ of a subset $A\subset S$ is defined as 
\begin{equation*}
\left( C_{A}\right) _{k}(x)=\left \{ 
\begin{array}{lcl}
t\geq \frac{1-k}{2} & \text{ if } & x\in A, \\ 
0 & \text{ if } & x\notin A.
\end{array}
\right.
\end{equation*}
}
\end{definition}

\medskip

The proof of the following proposition is very similar to the proof of analogous results for semigroups (cf. \cite{Shabir2}).

\begin{proposition}
\label{P2.5} Let $J$ be a nonempty subset of an $AG$-groupoid $S$. Then:

\begin{enumerate}
\item[$(i)$] $J$ is an ideal of $S$ if and only if $\left( C_{J}\right)_{k}$ is an $(\in ,\in\!\vee q_{k})$-fuzzy ideal of $S$,

\item[$(ii)$] $J$ is a left $($right$)$ ideal of $S$ if and only if $\left(C_{J}\right) _{k}$ is an $(\in ,\in\!\vee q_{k})$-fuzzy left $($right$)$ ideal of $S$,

\item[$(iii)$] $J$ is a bi-ideal of $S$ if and only if $\left(C_{J}\right)_{k}$ is an $(\in ,\in\!\vee q_{k})$-fuzzy bi-ideal of $S$,

\item[$(iv)$] $J$ is an interior ideal if and only if $\left( C_{J}\right)_{k}$ is an $(\in ,\in\!\vee q_{k})$-fuzzy interior ideal of $S$,

\item[$(v)$] $J$ is semiprime if and only if $\left( C_{J}\right) _{k}$ is an $(\in ,\in\!\vee q_{k})$-fuzzy semiprime.
\end{enumerate}
\end{proposition}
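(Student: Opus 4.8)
The plan is to reduce every clause of the statement to the pointwise inequalities of Proposition~\ref{P2.2} and then to exploit a single elementary observation about $f=(C_J)_k$. Namely, both the capped value $f_k(x)=\min\{f(x),\tfrac{1-k}{2}\}$ and the combination $f(x)\wedge_k f(y)=\min\{f(x),f(y),\tfrac{1-k}{2}\}$ take only two possible values: they equal $\tfrac{1-k}{2}$ exactly when all the relevant arguments lie in $J$, and equal $0$ otherwise. This truncation is the crux of the matter: although the definition pins $(C_J)_k$ down on $J$ only up to an arbitrary value $\geq\tfrac{1-k}{2}$, the operations $\wedge_k$ and $(\cdot)_k$ normalise every such value to $\tfrac{1-k}{2}$, so the passage between crisp and fuzzy conditions becomes exact.

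With this in hand I would first settle (ii) for left ideals, since the remaining parts are variations on the same bookkeeping. By Proposition~\ref{P2.2}(ii), $f=(C_J)_k$ is an $(\in,\in\!\vee q_k)$-fuzzy left ideal iff $f(xy)\geq f_k(y)$ for all $x,y\in S$. For the forward direction I would split on whether $y\in J$: if $y\notin J$ then $f_k(y)=0$ and the inequality is automatic, while if $y\in J$ then $SJ\subseteq J$ forces $xy\in J$, giving $f(xy)\geq\tfrac{1-k}{2}=f_k(y)$. Conversely, taking $y\in J$ makes $f_k(y)=\tfrac{1-k}{2}>0$, so the fuzzy inequality yields $f(xy)>0$, hence $xy\in J$; this is precisely $SJ\subseteq J$. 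The right-ideal case is identical using Proposition~\ref{P2.2}(iii), and (i) then follows at once by conjoining the left and right statements, since a fuzzy ideal is by definition both a fuzzy left and a fuzzy right ideal.

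Parts (iii) and (iv) carry one extra clause, the subgroupoid condition $f(xy)\geq f(x)\wedge_k f(y)$, which by the same case analysis on whether $x,y\in J$ is equivalent to $J^2\subseteq J$. The second clauses are handled identically: for the bi-ideal, $f(xy\cdot z)\geq f(x)\wedge_k f(z)$ is nontrivial only when $x,z\in J$, where it asserts $xy\cdot z\in J$, i.e.\ $(JS)J\subseteq J$; for the interior ideal, $f(xy\cdot z)\geq f_k(y)$ is nontrivial only when $y\in J$, where it asserts $xy\cdot z\in J$, i.e.\ $(SJ)S\subseteq J$. Combining the subgroupoid clause with each of these gives (iii) and (iv). Finally, for (v) the condition $f(a)\geq f_k(a^2)$ is vacuous when $a^2\notin J$ and, when $a^2\in J$, reads $f(a)\geq\tfrac{1-k}{2}$, i.e.\ $a\in J$; this is exactly the crisp requirement that $a^2\in J$ imply $a\in J$.

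I do not expect a serious obstacle, since the whole argument is careful case analysis rather than computation. The one point demanding consistent attention is the two-valued collapse described above, which must be invoked in both directions: forward, to discard the trivial case in which the right-hand side is $0$, and backward, to read off crisp membership in $J$ from the strict positivity of $(C_J)_k$.
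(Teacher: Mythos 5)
Your proof is correct and is essentially the argument the paper has in mind: the paper omits the proof, deferring to the analogous semigroup result in the cited reference, and that standard argument is exactly your reduction to the pointwise inequalities of Proposition~\ref{P2.2} followed by a case split on membership in $J$. Your explicit remark that $\wedge_k$ and $(\cdot)_k$ collapse the (only loosely specified) values of $(C_J)_k$ on $J$ to the single value $\tfrac{1-k}{2}$ is a worthwhile clarification of a point the paper's definition leaves slightly ambiguous.
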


The following lemma is obvious, so we omit the proof.

\begin{lemma}
\label{L2.7} Let $A,B$ be nonempty subsets of an $AG$-groupoid $S$. Then:

\begin{enumerate}
\item[$(i)$] \ $\left( C_{A\cap B}\right) _{k}=\left(C_{A}\wedge_{k}C_{B}\right)$,

\item[$(ii)$] \ $\left( C_{A\cup B}\right) _{k}=\left(C_{A}\vee_{k}C_{B}\right)$,

\item[$(iii)$] \ $\left( C_{AB}\right) _{k}=\left(C_{A}\circ_{k}C_{B}\right) .$
\end{enumerate}
\end{lemma}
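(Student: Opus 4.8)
The plan is to verify all three identities pointwise: fix an arbitrary $x\in S$ and compare the value of the two sides at $x$. The reason this will be painless is that each of $\wedge_k$, $\vee_k$, $\circ_k$ is designed to keep its output in the two-element range $\left\{0,\frac{1-k}{2}\right\}$ when applied to characteristic functions, while $\left(C_A\right)_k$ already takes only the values $\frac{1-k}{2}$ (on $A$) and $0$ (off $A$). So for each $x$ the whole question reduces to deciding which of these two values each side returns, and that in turn is decided purely by set membership.

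For (i), I would split on whether $x\in A\cap B$. If $x\in A\cap B$ then $x\in A$ and $x\in B$, so $C_A(x)$ and $C_B(x)$ are both at least $\frac{1-k}{2}$ and $\left(C_A\wedge_k C_B\right)(x)=\min\left\{C_A(x),C_B(x),\frac{1-k}{2}\right\}=\frac{1-k}{2}=\left(C_{A\cap B}\right)_k(x)$; if $x\notin A\cap B$ then at least one factor is $0$, so the minimum is $0$, again matching $\left(C_{A\cap B}\right)_k(x)$. Part (ii) is handled symmetrically: $\left(C_A\vee_k C_B\right)(x)$ returns $\frac{1-k}{2}$ exactly when $x$ lies in $A$ or in $B$, i.e.\ when $x\in A\cup B$, and returns $0$ when $x$ lies in neither, which is precisely the behaviour of $\left(C_{A\cup B}\right)_k$.

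The only part with any real content is (iii), where I would split according to whether $x$ is a product in $S$ and, if so, whether $x\in AB$. If $x$ has no factorization $x=pq$, then $\left(C_A\circ_k C_B\right)(x)=0$ by definition of $\circ_k$, and $x\notin AB$ gives $\left(C_{AB}\right)_k(x)=0$ as well. Otherwise $\left(C_A\circ_k C_B\right)(x)=\bigvee_{x=pq}\min\left\{C_A(p),C_B(q),\frac{1-k}{2}\right\}$, and the point is to evaluate this join. If $x\in AB$ I would pick a witnessing factorization $x=ab$ with $a\in A$ and $b\in B$; its term equals $\frac{1-k}{2}$, and since every term of the join is capped at $\frac{1-k}{2}$ the join is exactly $\frac{1-k}{2}=\left(C_{AB}\right)_k(x)$. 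If $x\notin AB$, then no factorization $x=pq$ can have both $p\in A$ and $q\in B$ (otherwise $x=pq\in AB$), so every term of the join contains a zero factor and the join collapses to $0=\left(C_{AB}\right)_k(x)$.

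I do not anticipate a genuine obstacle, which is exactly why the authors declare the lemma obvious; the only place demanding a moment's care is the product identity. There one must use both directions of the equivalence ``$x\in AB$ iff some factorization $x=pq$ satisfies $p\in A$ and $q\in B$'' and must invoke the truncation built into $\wedge_k$ to see that no term of the join in the definition of $\circ_k$ exceeds $\frac{1-k}{2}$, so that a single witnessing factorization already attains the supremum. Everything else is two-value bookkeeping driven entirely by membership in $A$, $B$, $A\cap B$, $A\cup B$ and $AB$.
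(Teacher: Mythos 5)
The paper gives no proof here (the lemma is declared obvious and omitted), so there is nothing to compare against; your pointwise, membership-driven verification is the intended argument. Parts $(i)$ and $(iii)$ are correct and complete: in particular you rightly isolate the two points of substance in $(iii)$, namely the case where $x$ admits no factorization at all, and the fact that the truncation at $\frac{1-k}{2}$ built into $\wedge_k$ caps every term of the join so that a single witnessing factorization $x=ab$ with $a\in A$, $b\in B$ already attains the supremum. Your reading of $\left(C_A\right)_k$ as taking the value exactly $\frac{1-k}{2}$ on $A$ is the only one under which the stated equalities can be literal, and it is the standard convention, so no objection there.

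Part $(ii)$, however, does not follow from the paper's stated definitions. The paper defines $(f\vee_k g)(a)=\max\{f(a),\,g(a),\,\frac{1-k}{2}\}$, so for $x\notin A\cup B$ one gets $\left(C_A\vee_k C_B\right)(x)=\max\{0,0,\frac{1-k}{2}\}=\frac{1-k}{2}$, not $0$; your sentence asserting that the right-hand side ``returns $0$ when $x$ lies in neither'' is exactly where the argument breaks, and with the definition as printed the identity $\left(C_{A\cup B}\right)_k=C_A\vee_k C_B$ is false off $A\cup B$. What you have implicitly used is the dual convention $(f\vee_k g)(a)=\min\{\max\{f(a),g(a)\},\frac{1-k}{2}\}$, i.e.\ $(f\vee g)_k$, under which your two-value bookkeeping is correct and mirrors the $\wedge_k$ case. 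You should either prove $(ii)$ for that corrected operation or explicitly flag the definitional mismatch; since $\vee_k$ and part $(ii)$ are not used anywhere else in the paper, nothing downstream is affected either way.
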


\section{Ideals of intra-regular AG-groupoids}

\begin{definition}
\rm{An element $a$ of an $AG$-groupoid $S$ is called \textit{intra-regular} if there exist $x,y\in S$ such that $a=xa^{2}\cdot y$. If all elements of $S$ are intra-regular, then we say that an $AG$-groupoid $S$ is \textit{intra-regular}. }
\end{definition}

\begin{example}\rm
Let $(G,\circ,e)$ be an arbitrary abelian group. Then, as it is not difficult to see, $G$ with the operation $xy=x^{-1}\circ y$ is an intra-regular $AG$-groupoid with the left identity $e$. This groupoid is not a semigroup. It is a special case of transitive distributive Steiner quasigroups (cf. \cite{Dud}). Moreover, in this $AG$-groupoid a fuzzy subset $f$ of $G$ is an $(\in,\in\!\vee q_{k})$-fuzzy subgroupoid of $(G,\cdot)$ if and only if it is an $(\in,\in\!\vee q_{k})$-fuzzy subgroup of the group $(G,\circ,e)$. 
\end{example}

\begin{example}\rm
It is not difficult to verify that the set $S=\left \{1,2,3,4,5,6\right \}$ with the multiplication defined by the table 
\begin{equation*}
\begin{tabular}{l|llllll}
$\cdot $ & $1$ & $2$ & $3$ & $4$ & $5$ & $6$ \\ \hline
$1$ & $1$ & $1$ & $1$ & $1$ & $1$ & $1$ \\ 
$2$ & $1$ & $2$ & $1$ & $1$ & $1$ & $1$ \\ 
$3$ & $1$ & $1$ & $5$ & $6$ & $3$ & $4$ \\ 
$4$ & $1$ & $1$ & $4$ & $5$ & $6$ & $3$ \\ 
$5$ & $1$ & $1$ & $3$ & $4$ & $5$ & $6$ \\ 
$6$ & $1$ & $1$ & $6$ & $3$ & $4$ & $5$
\end{tabular}
\end{equation*}
is an $AG$-groupoid. It is intra-regular because $1=(1\cdot 1^{2})\cdot 1$, $2=(2\cdot 2^{2})\cdot 2$, $3=(3\cdot 3^{2})\cdot 5$, $4=(6\cdot 4^{2})\cdot 3 $, $5=(5\cdot 5^{2})\cdot 5$ and $6=(4\cdot 6^{2})\cdot 3$. Moreover, $A=\{1\}$ and $B=\{1,2\}$ are its ideals. A fuzzy subset $f$ of $S$ such that $f(1)=0.9$, $f(2)=0.8$ and $f(x)=0.5$ otherwise, is an $(\in ,\in\!\vee
q_{k})$-fuzzy ideal of $S$.
\end{example}

\begin{lemma}
\label{L3.4} In a unitary intra-regular $AG$-groupoid $G$ for every $a\in G$ there exist $x,y,z\in G$ such that

\begin{enumerate}
\item[$(i)$] \ $a=a\cdot za$,

\item[$(ii)$] \ $a=wa\cdot a$,

\item[$(iii)$] \ $a=(a^2\cdot x^2y^2)a$,

\item[$(iv)$] \ $a=(a\cdot x^2y^2)a^2$,

\item[$(v)$] \ $a=a^2\cdot (ay^2\cdot x^2)$,

\item[$(vi)$] \ $a=a(y^2x^2\cdot a)\cdot a$,

\item[$(vii)$] \ $a=a^2z\cdot a^2$,

\item[$(viii)$] \ $a^2=az\cdot a$.
\end{enumerate}
\end{lemma}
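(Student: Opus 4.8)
The plan is to single out identity $(iii)$ as a core from which the other seven drop out by one or two applications of the laws \eqref{e1}--\eqref{e5}. Before anything else I would record the ingredients: the left identity $e$; the two consequences of the medial law $(uv)^2=u^2v^2$ and, in the unitary case, $u^2v^2=v^2u^2$ (squares commute, from the paramedial law / \eqref{e5}); and the ``collapse'' instance of \eqref{e4}, namely $a\cdot za=za^2$. I would also note at the outset that $a^2$ is itself intra-regular with witnesses $x^2,y^2$: squaring the defining relation $a=xa^2\cdot y$ and applying the medial law twice gives $a^2=x^2(a^2)^2\cdot y^2$. Throughout, $x,y$ denote the intra-regularity witnesses of $a$, while the various $z,w$ are built from $x,y,e$ and $a$.

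The heart of the argument is the core computation for $(iii)$, which I would run as
\begin{align*}
a=xa^2\cdot y &= (a\cdot xa)\,y = (y\cdot xa)\,a = (x\cdot ya)\,a\\
&=\bigl(x\cdot(xa^2\cdot y^2)\bigr)a=(xa^2\cdot xy^2)\,a=(x^2\cdot a^2y^2)\,a=(a^2\cdot x^2y^2)\,a.
\end{align*}
The first equality uses $xa^2=a\cdot xa$ (\eqref{e4}), the second is \eqref{e1}, and the third is $y\cdot xa=x\cdot ya$ (\eqref{e4}); the step from $(x\cdot ya)a$ to $\bigl(x\cdot(xa^2\cdot y^2)\bigr)a$ is the one genuine input, obtained by substituting $a=xa^2\cdot y$ into the inner factor $ya$ and simplifying $ya=xa^2\cdot y^2$ by \eqref{e4}; the last three steps are \eqref{e4}, the medial law, and \eqref{e4} again. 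This is where I expect the only real difficulty to lie: every substitution of the defining relation raises the $a$-degree, and the whole art is to time the applications of \eqref{e1} and \eqref{e4} so that the superfluous copies of $a$ are reabsorbed by the relation rather than proliferating, all while keeping a single trailing $a$ so that the result has the shape $(\,\cdot\,)a$.

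With $(iii)$ in hand the remaining identities follow cheaply. One application of \eqref{e1} to $(a^2\cdot x^2y^2)a$ swaps the outer factors and yields $(a\cdot x^2y^2)a^2$, which is $(iv)$; reading $(iv)$ as $z\cdot a^2$ with $z=a\cdot x^2y^2$ and applying the collapse $z\cdot a^2=a\cdot za$ gives $(i)$. Rewriting the inner product $a^2\cdot x^2y^2=(x^2y^2\cdot a)a$ by \eqref{e1} turns $(iii)$ into $wa\cdot a$ with $w=x^2y^2\cdot a$, which is $(ii)$. For $(v)$ I would apply \eqref{e5} to $(iv)$ to get $a=a^2\cdot(x^2y^2\cdot a)$ and then \eqref{e1} to rewrite $x^2y^2\cdot a=ay^2\cdot x^2$. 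For $(vi)$ I would verify $a^2\cdot x^2y^2=a\cdot(y^2x^2\cdot a)$, using that squares commute together with \eqref{e4} and \eqref{e5}, and substitute into $(iii)$. Finally $(viii)$ comes from $(i)$: substituting $a=za^2$ into the first factor of $a^2=a\cdot a$ and using the collapse gives $a^2=(za^2)a=(a\cdot za)a=a(za)\cdot a$, the required form.

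The one identity that needs more than a single move is $(vii)$, and it is the second place where the bracket bookkeeping must be watched. Starting from the form $a=a^2\cdot(x^2y^2\cdot a)$ found for $(v)$, I would substitute $a=z_0a^2$ (with $z_0=a\cdot x^2y^2$ as in $(i)$) into the trailing $a$ and push the two squared blocks together by repeated use of \eqref{e4} until $a=z_0\cdot\bigl(x^2y^2\cdot(a^2)^2\bigr)$; then writing $z_0=ez_0$ and applying \eqref{e5} followed by \eqref{e1} rearranges this into $z\cdot(a^2)^2$ for a suitable $z$, whereupon the collapse $z\cdot(a^2)^2=a^2\cdot za^2$ together with \eqref{e5} delivers the required shape $a^2z\cdot a^2$. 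I expect the core step for $(iii)$ and this reassembly in $(vii)$ to be the only parts requiring care; everything else is a one-line consequence of the laws already recorded.
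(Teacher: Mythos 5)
Your proposal is correct: I checked each application of \eqref{e1}--\eqref{e5} and they all hold, including the core chain for $(iii)$, the identity $a^2\cdot x^2y^2 = y^2x^2\cdot a^2 = a\cdot(y^2x^2\cdot a)$ behind $(vi)$, the one-line $(viii)$, and the reassembly $a=z_0\cdot(x^2y^2\cdot (a^2)^2)=z\cdot(a^2)^2=a^2\cdot za^2=a^2z\cdot a^2$ for $(vii)$. Your organization, however, genuinely differs from the paper's. The paper proves $(i)$ directly from $a=xa^2\cdot y$ (reaching $a=z\cdot a^2$ with $z=x\cdot ye$ and collapsing via \eqref{e4}), derives $(ii)$ from $(i)$, and then devotes two long, essentially independent computations to $(vii)$ and $(viii)$: for $(vii)$ it first establishes $a=(ya\cdot u)\cdot xa^2$ with $u=y\cdot xe$ and then grinds through about a dozen further applications of the laws, while for $(viii)$ it separately proves $a^2=ua\cdot a$ with $u=ya\cdot x$ and then shows $ua=az$. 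You instead make $(iii)$ the single source: $(i)$ and $(ii)$ fall out of $(iii)$/$(iv)$ in one move each, $(viii)$ reduces to $a^2=(za^2)a=(a\cdot za)a$, and $(vii)$ is obtained by feeding the trailing $a$ of the intermediate form $a=a^2\cdot(x^2y^2\cdot a)$ back into itself and collapsing $z\cdot(a^2)^2$. This buys a markedly shorter and more transparent treatment of $(vii)$ and $(viii)$, at the price of producing different witnesses $z,w$ than the paper's; since the lemma only asserts existence (and the paper itself uses different witnesses in different items), nothing is lost. Your preliminary observation that $a^2$ is intra-regular with witnesses $x^2,y^2$ is true but is never actually needed in your argument; you could drop it.
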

\begin{proof}
$(i).$ \ Let $S$ be a unitary intra-regular $AG$-groupoid with the left identity $e$. Since for every $a\in S$ there exist $x,y\in S$ such that $a=xa^{2}\cdot y$, we have 
$$
a =xa^{2}\cdot y=xa^{2}\cdot ey\overset{\eqref{e5}}{=}ye\cdot a^{2}x\overset{\eqref{e4}}{=}a^{2}(ye\cdot x)\\
\overset{\eqref{e5}}{=}(x\cdot ye)a^{2}=z\cdot a^2\overset{\eqref{e4}}{=}a\cdot za
$$
for $z=x\cdot ye$.

$(ii).$ From $(i)$ we obtain
$$
a=a\cdot za=ea\cdot za\overset{\eqref{e2}}{=}ez\cdot a^2 \overset{\eqref{e5}}{=}a^2\cdot ze\overset{\eqref{e1}}{=}(ze\cdot a)a=wa\cdot a
$$
wchich proves $(ii)$. 

$(iii)$. Using \eqref{e4} and \eqref{e1}, we have 
\arraycolsep=.5mm 
\begin{eqnarray*}
a&=&xa^{2}\cdot y=(x\cdot aa)y=(a\cdot xa)y \overset{\eqref{e1}}{=}(y\cdot xa)a=(y\cdot x(xa^{2}\cdot y))a \\
&=&(y\cdot (xa^{2}\cdot xy))a=(xa^{2}\cdot (y\cdot xy))a=(xa^{2}\cdot xy^2)a=(a^{2}\cdot x^{2}y^{2})a.
\end{eqnarray*}
This proves $(iii)$. Applying \eqref{e1} to $(iii)$ we obtain $(iv)$. Now, $(iv)$ together with \eqref{e2} and \eqref{e1} imply $(v)$. $(vi)$ is a consequence of $(iii)$. Indeed, \arraycolsep=.5mm 
\begin{eqnarray*}
a&=&(a^{2}\cdot x^{2}y^{2})a\overset{\eqref{e2}}{=}(x^{2}\cdot a^{2}y^{2})a
\overset{\eqref{e4}}{=}(a^2\cdot x^{2}y^{2})a\overset{\eqref{e3}}{=}(y^{2}a\cdot x^{2}a)a \\
&\overset{\eqref{e2}}{=}&(y^{2}x^{2}\cdot aa)a\overset{\eqref{e4}}{=}(a\cdot (y^{2}x^{2}\cdot a))a=a(y^2x^2\cdot a)\cdot a.
\end{eqnarray*}
Similarly, using \eqref{e4} and the left identity $e$ of $G$, we have 
\begin{eqnarray*}
a &=&xa^{2}\cdot y=(a\cdot xa)y=(a\cdot x(xa^{2}\cdot y))y=(a\cdot (xa^{2}\cdot xy))y \\
&=&(xa^{2}\cdot (a\cdot xy))y\overset{\eqref{e1}}{=}(y(a\cdot xy))\cdot xa^{2}=(y(ea\cdot xy))\cdot xa^{2} \\
&\overset{\eqref{e3}}{=}&(y(ya\cdot xe))\cdot xa^{2}= (ya\cdot (y\cdot xe))\cdot xa^{2},
\end{eqnarray*}
i.e., $\,a=(ya\cdot u)\cdot xa^{2}$, where $u=y\cdot xe$.

Further 
\begin{eqnarray*}
a&=&(ya\cdot u)\cdot xa^{2}=(y(xa^{2}\cdot y)\cdot u)\cdot xa^{2}=((xa^{2}\cdot y^2)\cdot u)\cdot xa^{2} \\
&\overset{\eqref{e1}}{=}&(uy^{2}\cdot xa^{2})\cdot xa^{2}\overset{\eqref{e1}}{=}(xa^{2}\cdot xa^{2})\cdot uy^{2} \overset{\eqref{e2}}{=}(x^2\cdot a^{2}a^{2})\cdot uy^{2} \\
&\overset{\eqref{e3}}{=}&(a^{2}x\cdot a^{2}x)\cdot uy^{2} \overset{\eqref{e1}}{=}(a^{2}x\cdot x)a^{2}\cdot uy^{2}\overset{\eqref{e1}}{=}(x^2\cdot a^{2})a^{2}\cdot uy^{2} \\
&=&(x^2a^{2}\cdot a^{2})\cdot uy^{2} \overset{\eqref{e1}}{=}(uy^{2}\cdot a^{2})\cdot x^{2}a^{2} \overset{\eqref{e2}}{=}(uy^{2}\cdot x^{2})\cdot a^{2}a^{2} \\
&=&a^{2}\cdot (uy^{2}\cdot x^{2})a^{2} \overset{\eqref{e2}}{=}a(uy^{2}\cdot x^{2})\cdot aa^{2} \overset{\eqref{e3}}{=}a^{2}(uy^{2}\cdot x^{2})\cdot a^2.
\end{eqnarray*}
Thus $\,a=a^2z\cdot a^2$ for $z=uy^2\cdot x^2$. This proves $(vii)$.

To prove $(viii)$ observe first that 
\begin{eqnarray*}
a^2&=&a(xa^2\cdot y)\overset{\eqref{e4}}{=}xa^2\cdot ay\overset{\eqref{e5}}{=}ya\cdot a^2x\overset{\eqref{e4}}{=}a^2\cdot (ya\cdot x)\overset{\eqref{e1}}{=}(ya\cdot x)a\cdot a,
\end{eqnarray*}
i.e., 
\begin{equation}  \label{e6}
a^2=ua\cdot a
\end{equation}
for $u=ya\cdot x$.

On the other hand 
\begin{eqnarray*}
ua&=&u(xa^2\cdot y)\overset{\eqref{e4}}{=}xa^2\cdot uy\overset{\eqref{e5}}{=}%
yu\cdot a^2x\overset{\eqref{e4}}{=}a^2\cdot (yu\cdot x) \\
&\overset{\eqref{e5}}{=}&(x\cdot yu)\cdot a^2 \overset{\eqref{e4}}{=}a\cdot
(x\cdot yu)a= az,
\end{eqnarray*}
where $\,z=(x\cdot yu)a$.

This together with \eqref{e6} proves $(vi)$.
\end{proof}

As a simple consequence of Lemma \ref{L3.4} we obtain

\begin{corollary}
\label{C3.5} In unitary intra-regular $AG$-groupoids fuzzy left $($right$)$ ideals and fuzzy generalized bi-ideals $($interior ideals$)$ are semiprime.
\end{corollary}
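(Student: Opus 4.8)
The plan is to verify the defining inequality of $(\in,\in\!\vee q_k)$-fuzzy semiprimeness, namely $f(a)\geq f_k(a^2)$ for every $a\in G$, separately for each type of fuzzy ideal, in every case by substituting a suitable factorisation of $a$ taken from Lemma~\ref{L3.4} (or from intra-regularity itself) into the corresponding pointwise characterisation of Proposition~\ref{P2.2}. I would first record the two bookkeeping facts $f_k(a^2)=\min\{f(a^2),\frac{1-k}{2}\}$ and $f(a^2)\wedge_k f(a^2)=\min\{f(a^2),\frac{1-k}{2}\}=f_k(a^2)$, so that in each case the task reduces to exhibiting a representation of $a$ that places $a^2$ in exactly the slot inspected by the relevant inequality.

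For a fuzzy left ideal I would use item~$(vii)$ of Lemma~\ref{L3.4}, writing $a=(a^2z)\cdot a^2$; since Proposition~\ref{P2.2}$(ii)$ gives $f(xy)\geq f_k(y)$, taking $x=a^2z$ and $y=a^2$ yields $f(a)\geq f_k(a^2)$ at once. For a fuzzy right ideal I would instead use item~$(v)$, writing $a=a^2\cdot(ay^2\cdot x^2)$, and apply $f(xy)\geq f_k(x)$ of Proposition~\ref{P2.2}$(iii)$ with $x=a^2$; alternatively one may simply invoke Corollary~\ref{C2.6}, by which every fuzzy right ideal is already a fuzzy left ideal and hence semiprime by the previous case. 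For a fuzzy generalized bi-ideal the same factorisation $a=(a^2\cdot z)\cdot a^2$ from item~$(vii)$ fits the pattern $xy\cdot z$ of Proposition~\ref{P2.2}$(v)$ with both outer factors equal to $a^2$, so $f(a)\geq f(a^2)\wedge_k f(a^2)=f_k(a^2)$. Finally, for a fuzzy generalized interior ideal no auxiliary identity is needed: the definition of intra-regularity already gives $a=xa^2\cdot y=(x\cdot a^2)\cdot y$, which has $a^2$ in the middle slot, so Proposition~\ref{P2.2}$(vii)$, $f(xy\cdot z)\geq f_k(y)$, gives $f(a)\geq f_k(a^2)$. Since the defining inequalities of fuzzy bi-ideals and fuzzy interior ideals, Proposition~\ref{P2.2}$(iv),(vi)$, contain exactly these generalized inequalities, those cases are subsumed automatically.

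The computations themselves are one-line substitutions, so the only real work is the combinatorial matching in the preceding paragraph: selecting, for each of the four inequalities, the member of Lemma~\ref{L3.4} whose factorisation deposits $a^2$ in precisely the position the inequality reads. This matching is the main point, and it is what makes Lemma~\ref{L3.4} the right tool, in particular the symmetric representation $a=a^2z\cdot a^2$ of item~$(vii)$, which simultaneously services the left-ideal and the generalized-bi-ideal cases. I would also double-check the trivial but easy-to-slip identity $f(a^2)\wedge_k f(a^2)=f_k(a^2)$, since that is what lets the two-variable bi-ideal bound collapse to the single term $f_k(a^2)$ demanded by semiprimeness.
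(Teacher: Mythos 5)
Your proposal is correct and follows exactly the route the paper intends: Corollary~\ref{C3.5} is stated as an immediate consequence of Lemma~\ref{L3.4}, and your matching of items $(vii)$, $(v)$, and the defining identity $a=xa^2\cdot y$ to the pointwise inequalities of Proposition~\ref{P2.2} (together with the observation $f(a^2)\wedge_k f(a^2)=f_k(a^2)$) is precisely the omitted verification. Nothing to correct.
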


It is not difficult to see that generalized interior ideals are semiprime also in intra-regular $AG$-groupoids which are not unitary.

\begin{corollary}
\label{C3.6} In an intra-regular $AG$-groupoid with a left identity $e$ we have $f(a)=f(a^2)\geq f(e)$ for all fuzzy left $($right$)$ ideals, fuzzy generalized bi-ideals and fuzzy generalized interior ideals.
\end{corollary}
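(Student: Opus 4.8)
My plan is to establish the three relations $f(a^{2})\ge f(a)$, $f(a)\ge f(a^{2})$ and $f(a)\ge f(e)$ separately, and to do each of them case by case for the four kinds of fuzzy ideal (left, right, generalized bi-ideal, generalized interior ideal), since each kind only controls a specific position inside a product. The unifying idea is to feed the algebraic factorizations of $a$ and $a^{2}$ supplied by Lemma \ref{L3.4} (and by the left identity) into the defining inequality of the fuzzy ideal in question, namely $f(xy)\ge f(y)$ for a left ideal, $f(xy)\ge f(x)$ for a right ideal, $f(xy\cdot z)\ge f(x)\wedge f(z)$ for a generalized bi-ideal, and $f(xy\cdot z)\ge f(y)$ for a generalized interior ideal.

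For $f(a^{2})\ge f(a)$ the left and right cases are immediate from $a^{2}=a\cdot a$. For a generalized bi-ideal I would read Lemma \ref{L3.4}(viii), $a^{2}=az\cdot a$, as $(a\cdot z)\cdot a$, so that $f(a^{2})\ge f(a)\wedge f(a)=f(a)$. For a generalized interior ideal I would instead write $a^{2}=(ea)\cdot a$ using the left identity, which exhibits $a$ as the \emph{central} factor and hence gives $f(a^{2})\ge f(a)$.

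For the reverse inequality $f(a)\ge f(a^{2})$ the whole point is to factor $a$ so that $a^{2}$ lands in the position the ideal controls. Lemma \ref{L3.4}(vii), $a=a^{2}z\cdot a^{2}=(a^{2}\cdot z)\cdot a^{2}$, handles the left ideal case and the generalized bi-ideal case simultaneously; for a right ideal I would use Lemma \ref{L3.4}(iii), $a=(a^{2}\cdot x^{2}y^{2})\cdot a$, and apply the right ideal inequality twice. For the generalized interior ideal the cleanest choice is the definition of intra-regularity itself, $a=xa^{2}\cdot y=(x\cdot a^{2})\cdot y$, which already places $a^{2}$ as the middle factor, yielding $f(a)\ge f(a^{2})$. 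Combining the two directions gives $f(a)=f(a^{2})$ in every case.

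Finally, for $f(a)\ge f(e)$ I would first record the factorization $a=(ae)\cdot e$, obtained from $a=ea=(ee)a$ and the left invertive law \eqref{e1}. This single identity settles the left ideal case (take the right factor $e$) and the generalized interior ideal case ($e$ is then the central factor); the right ideal case is even more direct from $a=e\cdot a$; and for the generalized bi-ideal I would rewrite $a=(ae)e$ as $(e\cdot ae)\cdot e$, whose first and last factors are both $e$, so that $f(a)\ge f(e)\wedge f(e)=f(e)$. The step I expect to be the main obstacle is the generalized interior ideal, because its defining inequality constrains only the central factor of a triple product; the resolution is precisely the observation that the intra-regularity identity $a=xa^{2}\cdot y$, together with $a^{2}=(ea)a$ and $a=(ae)e$, conveniently drops the desired element ($a^{2}$, $a$, and $e$ respectively) into exactly that central slot.
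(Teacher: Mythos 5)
Your proposal is correct and follows essentially the same route as the paper: both arguments feed Lemma \ref{L3.4}$(vii)$--$(viii)$, the intra-regularity identity $a=xa^{2}\cdot y$, and left-identity factorizations such as $a^{2}=ea\cdot a$ and $a=ae\cdot e$ into the defining inequality of each kind of fuzzy ideal, with only trivial variations in which factorization delivers the bound $\ge f(e)$ (the paper uses $f(a^{2})=f(ea^{2}\cdot e)$ and $f(e^{2}\cdot a^{2})$ where you use $f\bigl((e\cdot ae)\cdot e\bigr)$ and $f\bigl((ae)\cdot e\bigr)$). Your write-up is merely more explicit about the left- and right-ideal cases, which the paper dismisses as clear.
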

\begin{proof}
For fuzzy left (right) ideals it is clear. Using Lemma \ref{L3.4} $(vii)$ and $(viii)$ we obtain $f(a)=f(a^2)$ for fuzzy generalized bi-ideals. In this case also $f(a^2)=f(ea\cdot a)=f(a^2\cdot e)=f(ea^2\cdot e)\geq f(e)$.

For fuzzy generalized interior ideals we have $f(a^2)=f(ea\cdot a)\geq f(a)=f(xa^2\cdot y)\geq f(a^2)$. So $f(a)=f(a^2)$ for every $a\in G$. Moreover, $f(a^2)=f(e^2\cdot a^2)\geq f(e)$.
\end{proof}

Comparing the above results with Proposition \ref{P2.2} we can see that the corresponding results are valid for $(\in ,\in\!\vee q_{k})$-fuzzy ideals too. Moreover, as a consequence of Proposition \ref{P2.5} we obtain

\begin{corollary}
\label{C3.7} In unitary intra-regular $AG$-groupoids all left $($right$)$ ideals and all generalized bi-ideals $($interior ideals$)$ are semiprime.
\end{corollary}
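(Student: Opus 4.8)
The plan is to deduce this crisp statement from its fuzzy counterpart, Corollary \ref{C3.5}, by passing through characteristic functions. The engine is Proposition \ref{P2.5}, which provides a dictionary: a subset $J$ is a left (right) ideal, bi-ideal, or interior ideal exactly when its characteristic function $(C_J)_k$ is the corresponding $(\in,\in\!\vee q_k)$-fuzzy object, and (by part $(v)$) $J$ is semiprime exactly when $(C_J)_k$ is $(\in,\in\!\vee q_k)$-fuzzy semiprime. Thus the whole corollary reduces to a mechanical back-and-forth through this correspondence.

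Concretely, given a left (right) ideal $J$ of a unitary intra-regular $AG$-groupoid $S$, I would first invoke Proposition \ref{P2.5}$(ii)$ to see that $(C_J)_k$ is an $(\in,\in\!\vee q_k)$-fuzzy left (right) ideal. Next, by Corollary \ref{C3.5} read in its $(\in,\in\!\vee q_k)$-fuzzy form, whose validity is asserted in the paragraph preceding this corollary, the fuzzy subset $(C_J)_k$ is $(\in,\in\!\vee q_k)$-fuzzy semiprime. Finally, Proposition \ref{P2.5}$(v)$ translates this back to the conclusion that $J$ is semiprime. The identical three-step loop, forward through Proposition \ref{P2.5}, semiprimeness from Corollary \ref{C3.5}, backward through Proposition \ref{P2.5}$(v)$, handles generalized bi-ideals and interior ideals verbatim.

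The one point needing care is that Proposition \ref{P2.5} is phrased for bi-ideals and interior ideals, whereas Corollary \ref{C3.5} concerns the generalized versions, so I would invoke the analogues of Proposition \ref{P2.5}$(iii)$--$(iv)$ for generalized bi-ideals and generalized interior ideals, whose proofs mirror the semigroup case already cited. Alternatively, one can bypass fuzzy sets altogether and argue directly from Lemma \ref{L3.4}: assuming $a^2\in J$, the factorization $a=z\cdot a^2$ from the proof of part $(i)$ gives $a\in SJ\subseteq J$ for a left ideal $J$; the factorization $a=a^2\cdot(ay^2\cdot x^2)$ of part $(v)$ gives $a\in JS\subseteq J$ for a right ideal; and $a=a^2z\cdot a^2=(a^2z)a^2$ from part $(vii)$ gives $a\in (JS)J\subseteq J$ for a generalized bi-ideal, while the defining factorization $a=xa^2\cdot y=(xa^2)y$ places $a$ in $(SJ)S\subseteq J$ for an interior ideal even without a left identity. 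I expect the only mild obstacle to be the bookkeeping of matching each ideal type to the appropriate identity supplied by Lemma \ref{L3.4}; once the right factorization is selected, closure of $J$ forces $a\in J$ immediately.
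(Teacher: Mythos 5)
Your primary route---passing a crisp ideal $J$ to $(C_J)_k$ via Proposition \ref{P2.5}, applying the $(\in,\in\!\vee q_k)$-fuzzy form of Corollary \ref{C3.5}, and translating back with Proposition \ref{P2.5}$(v)$---is exactly the argument the paper intends (it presents the corollary as "a consequence of Proposition \ref{P2.5}"), and your caveat about the generalized versus non-generalized versions is a legitimate point the paper glosses over. Your alternative direct argument from the factorizations in Lemma \ref{L3.4} is also correct and, if anything, cleaner, since it avoids the fuzzy detour entirely; both versions are sound.
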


\begin{theorem}
\label{T3.8} A unitary $AG$-groupoid $S$ is intra-regular if and only if $a\in Sa^2$ for every $a\in S$.
\end{theorem}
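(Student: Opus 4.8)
The statement is an equivalence, so the plan is to treat the two implications separately.

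For the forward implication, suppose $S$ is intra-regular and fix $a\in S$, so that $a=xa^{2}\cdot y$ for some $x,y\in S$. The goal is to rewrite the right-hand side as a single left multiple of $a^{2}$. This is precisely the rearrangement already carried out inside the proof of Lemma~\ref{L3.4}$(i)$: writing $y=ey$ with the left identity $e$ and applying \eqref{e5}, then \eqref{e4}, then \eqref{e5} once more, one collapses $xa^{2}\cdot y$ to $(x\cdot ye)\,a^{2}$. Hence $a=za^{2}$ with $z=x\cdot ye\in S$, which gives $a\in Sa^{2}$. I expect this direction to be routine, since it only repackages an identity that is already established.

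For the reverse implication, assume $a\in Sa^{2}$ for every $a\in S$; fix $a$ and choose $s\in S$ with $a=sa^{2}$. Here the difficulty is that $a^{2}$ sits on the \emph{right} of the product $s\cdot a^{2}$, whereas the definition of intra-regularity requires $a^{2}$ to appear inside the \emph{left} factor of the outer product, in the shape $xa^{2}\cdot y$. The key manoeuvre is to expose the left identity so that a paramedial-type law can transport $a^{2}$ across the product. Concretely, I would write $s=es$, so that $a=(es)(aa)$ is genuinely a product of the two pairs $es$ and $aa$, and then apply \eqref{e5} to obtain $a=(es)(aa)\overset{\eqref{e5}}{=}a^{2}\cdot(se)$. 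Finally, since $ea^{2}=a^{2}$, I rewrite this as $a=(ea^{2})\cdot(se)$, which is exactly the intra-regular form $xa^{2}\cdot y$ with $x=e$ and $y=se$, both of which lie in $S$.

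The main obstacle is the reverse direction, and specifically the single step $sa^{2}=(es)(aa)\overset{\eqref{e5}}{=}a^{2}\cdot(se)$: everything hinges on inserting $e$ on the correct side so that \eqref{e5} applies to honest products of pairs, and on recognizing afterwards that $x=e$ is a legitimate choice because $e\in S$ and $ea^{2}=a^{2}$. Once that step is in hand, both implications close immediately.
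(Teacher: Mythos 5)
Your proof is correct, but it takes a noticeably more direct, element-wise route than the paper. For the forward direction you reuse the identity $a=(x\cdot ye)a^{2}$ extracted from the proof of Lemma~\ref{L3.4}$(i)$, which is legitimate since that lemma is established before Theorem~\ref{T3.8} and does not depend on it; the paper instead runs a chain of set inclusions $a\in Sa^{2}\cdot S\subseteq\cdots\subseteq Sa^{2}$, having first shown $Sa^{2}\cdot S\subseteq Sa^{2}$. For the reverse direction your single computation $a=sa^{2}=(es)(aa)\overset{\eqref{e5}}{=}a^{2}\cdot(se)=(ea^{2})(se)$ is valid (the application of \eqref{e5} with the four factors $e,s,a,a$ checks out, and $x=e$, $y=se$ are legitimate witnesses), and it is shorter than the paper's corresponding chain $Sa^{2}=eS\cdot a^{2}=a^{2}S\cdot e\subseteq\cdots\subseteq Sa^{2}\cdot S$. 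What the paper's heavier set-theoretic formulation buys is a by-product: the computation $Sa^{2}\cdot S\subseteq Sa^{2}$ shows that $Sa^{2}$ is a right ideal (and, with a little more work, an interior ideal), which is exactly what is needed later in the proof of Corollary~\ref{C3.9}. Your argument proves the theorem itself more economically but does not produce that auxiliary fact.
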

\begin{proof}
Let $S$ be an $AG$-groupoid and let $e$ be its left identity. Then for every $a\in S$ we have $Sa^2\cdot S\overset{\eqref{e4}}{=}(a\cdot Sa)S\overset{\eqref{e1}}{=}(S\cdot Sa)a=(eS\cdot Sa)a\overset{\eqref{e3}}{=}(aS\cdot Se)a\subseteq (aS\cdot S)a\overset{\eqref{e1}}{=}aS\cdot aS\overset{\eqref{e3}}{=}S^2a^2=Sa^2$, which shows that $Sa^2\cdot S\subseteq Sa^2$. Obviously, $a^2\in Sa^2$.

If $S$ is intra-regular, then for every $a\in S$ we obtain 
\begin{equation*}
a\in Sa^2\cdot S\subseteq (S\cdot Sa^2)\cdot S=(S\cdot Sa^2)\cdot eS=Se\cdot
(Sa^2\cdot S)\subseteq Se\cdot a^2=S^2\cdot ea^2=Sa^2.
\end{equation*}
So, $a\in Sa^2$.

Conversely, since $a\in Sa^2$ for every $a\in S$, thus  
\begin{equation*}
a\in Sa^2=eS\cdot a^2=a^2S\cdot e\subseteq a^2S\cdot S=(a^2\cdot eS)\cdot S%
\overset{\eqref{e5}}{=}(Se\cdot a^2)\cdot S\subseteq Sa^2\cdot S .
\end{equation*}
Hence $a\in Sa^2\cdot S$.
\end{proof}

\begin{corollary}
\label{C3.9} A unitary $AG$-groupoid is intra-regular if and only if all its right ideals $($or equivalently: all its interior ideals$\,)$ are semiprime.
\end{corollary}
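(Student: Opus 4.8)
The plan is to treat the statement as a chain of equivalences and to route everything through the criterion of Theorem~\ref{T3.8}, namely that a unitary $AG$-groupoid $S$ is intra-regular precisely when $a\in Sa^{2}$ for every $a\in S$. Necessity in both formulations is already available: if $S$ is intra-regular, then Corollary~\ref{C3.7} tells us that all its right ideals and all its interior ideals are semiprime. So the real work lies in the two converse implications, and I would establish each by exhibiting, for an arbitrary $a\in S$, a suitable semiprime set that contains $a^{2}$ and forces $a\in Sa^{2}$.

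For the right-ideal converse I would use the set $Sa^{2}$ itself. The computation carried out in the proof of Theorem~\ref{T3.8} already shows $Sa^{2}\cdot S\subseteq Sa^{2}$, so $Sa^{2}$ is a right ideal of $S$, and clearly $a^{2}=ea^{2}\in Sa^{2}$ via the left identity $e$. Assuming every right ideal is semiprime, $Sa^{2}$ is semiprime, and applying the definition of semiprimeness to the element $a$ converts the membership $a^{2}\in Sa^{2}$ into $a\in Sa^{2}$. Since $a$ was arbitrary, Theorem~\ref{T3.8} yields that $S$ is intra-regular.

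For the interior-ideal version I would reduce to the case just handled rather than repeat the argument. In a unitary $AG$-groupoid every right ideal $B$ is in fact a two-sided ideal (as recalled in the preliminaries), so $SB\subseteq B$ and $BS\subseteq B$; consequently $(SB)S\subseteq BS\subseteq B$ and $B^{2}\subseteq BS\subseteq B$, which says precisely that $B$ is an interior ideal. Hence every right ideal lies in the class of interior ideals, and the hypothesis that all interior ideals are semiprime forces all right ideals to be semiprime. The previous paragraph then applies verbatim.

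The calculations involved are light, so the nearest thing to an obstacle is the choice of $Sa^{2}$ as the test ideal together with the recognition that it is already known to be a right ideal from Theorem~\ref{T3.8}; once that set is in hand, semiprimeness does all the remaining work. In the interior-ideal direction, the single point to get right is the inclusion of right ideals among interior ideals, which rests on the unitary hypothesis (through the fact that right ideals are two-sided) rather than on any new computation.
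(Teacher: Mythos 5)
Your proof is correct, and its skeleton coincides with the paper's: necessity comes from Corollary~\ref{C3.7}, and sufficiency in the right-ideal formulation is obtained exactly as in the paper by applying the semiprimeness hypothesis to $Sa^{2}$ (already known to be a right ideal from the first part of the proof of Theorem~\ref{T3.8}, with $a^{2}=ea^{2}\in Sa^{2}$) and then invoking Theorem~\ref{T3.8}. Where you diverge is the interior-ideal variant: the paper verifies by a direct computation, $(S\cdot Sa^{2})\cdot S=(S\cdot Sa^{2})\cdot eS=Se\cdot(Sa^{2}\cdot S)\subseteq Se\cdot Sa^{2}=Sa^{2}$, that the same set $Sa^{2}$ is itself an interior ideal and applies semiprimeness to it directly, whereas you prove the general containment that in a unitary $AG$-groupoid every right ideal $B$ is an interior ideal (since $SB\subseteq B$ by the remark in the preliminaries, hence $(SB)S\subseteq BS\subseteq B$ and $B^{2}\subseteq BS\subseteq B$), which reduces the interior-ideal hypothesis to the right-ideal one. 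Both routes are valid: yours yields a reusable inclusion between the two classes of ideals and avoids any new computation, at the cost of leaning on the preliminary observation that right ideals are two-sided in the unitary case; the paper's argument is self-contained for the specific test set $Sa^{2}$.
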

\begin{proof}
By Corollary \ref{C3.7} in any unitary intra-regular $AG$-groupoid right ideals and interior ideals are semiprime. On the
other side, from the first part of the proof of Theorem \ref{T3.8} it follows that $Sa^2$ is a right ideal of each unitary $AG$-groupoid $S$. Moreover, $Sa^2$ is also an interior ideal. Indeed, 
\begin{equation*}
(S\cdot Sa^2)\cdot S=(S\cdot Sa^2)\cdot eS\overset{\eqref{e5}}{=}Se\cdot
(Sa^2\cdot S)\subseteq Se\cdot Sa^2\overset{\eqref{e2}}{=}Sa^2 .
\end{equation*}

So, if all right ideals or all interior ideals of $S$ are semiprime, then, by Theorem \ref{T3.8}, a unitary $AG$-groupoid $S$ is intra-regular.
\end{proof}

Using these results, we can get useful characterizations of unitary intra-regular $AG $-groupoids by their ideals of various types. Let's start with the characterizations by left ideals.

\begin{theorem}
\label{T3.10} For a unitary $AG$-groupoid $S$ the following conditions are equivalent.

\begin{enumerate}
\item[$(i)$] $S$ is intra-regular.
\item[$(ii)$] $A\cap B\cap C\subseteq (AB)C$ for all subsets of $S$ when one of them is a left ideal,
\item[$(iii)$] $f\wedge_{k}g\wedge_{k}h\leq (f\circ_{k}g)\circ_{k}h$ \ for all $(\in,\in\!\vee q_{k})$-fuzzy subsets of $S$ when one of them is an $(\in,\in\!\vee q_{k})$-fuzzy left ideal.
\end{enumerate}
\end{theorem}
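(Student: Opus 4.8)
The plan is to prove the cycle of implications $(i)\Rightarrow(iii)\Rightarrow(ii)\Rightarrow(i)$. The two steps $(iii)\Rightarrow(ii)$ and $(ii)\Rightarrow(i)$ are essentially formal once the right objects are chosen, and almost all of the substance lies in $(i)\Rightarrow(iii)$, where the refined intra-regular decompositions of Lemma \ref{L3.4} must be turned into an inequality of fuzzy products.

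For $(iii)\Rightarrow(ii)$ I would pass to characteristic functions. Given subsets $A,B,C$ of $S$ with, say, $A$ a left ideal, set $f=(C_A)_k$, $g=(C_B)_k$, $h=(C_C)_k$; by Proposition \ref{P2.5}$(ii)$ the function $f$ is an $(\in,\in\!\vee q_k)$-fuzzy left ideal, so $(iii)$ applies to this triple. Lemma \ref{L2.7}$(i),(iii)$ identifies $f\wedge_k g\wedge_k h$ with $(C_{A\cap B\cap C})_k$ and $(f\circ_k g)\circ_k h$ with $(C_{(AB)C})_k$, and the resulting pointwise inequality forces $A\cap B\cap C\subseteq (AB)C$. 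For $(ii)\Rightarrow(i)$ I would use the principal left ideal $Sa$. A unitary $AG$-groupoid is surjective, since $S=eS\subseteq S^2$, hence $S=S^2$; using this together with \eqref{e3} and \eqref{e1} one checks $S(Sa)=(SS)(Sa)=(aS)(SS)=(aS)S\subseteq Sa$, so $Sa$ is a left ideal, and clearly $a=ea\in Sa$. Applying $(ii)$ with $A=B=C=Sa$ gives $a\in (Sa\cdot Sa)(Sa)$, and the medial law \eqref{e2} yields $Sa\cdot Sa=Sa^2$, whence $a\in (Sa^2)(Sa)\subseteq Sa^2\cdot S$. This is precisely $a=xa^2\cdot y$ for suitable $x,y$, so $S$ is intra-regular.

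The core is $(i)\Rightarrow(iii)$. Fix $a\in S$ and recall $((f\circ_k g)\circ_k h)(a)=\bigvee_{a=(PQ)V}\big(f(P)\wedge_k g(Q)\wedge_k h(V)\big)$, so it suffices to exhibit one factorization $a=(PQ)V$ with $f(P)\wedge_k g(Q)\wedge_k h(V)\geq (f\wedge_k g\wedge_k h)(a)$. The guiding principle is that a factor matched with an arbitrary fuzzy subset must be exactly $a$, whereas the factor matched with the fuzzy left ideal may be any left multiple $ma$, since then Proposition \ref{P2.2}$(ii)$ gives value at least $f_k(a)$. I would therefore split into three cases according to which of $f,g,h$ is the fuzzy left ideal and supply, from Lemma \ref{L3.4}, a factorization of the required shape in each case. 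When $g$ is the ideal, $a=a(y^2x^2\cdot a)\cdot a$ from \ref{L3.4}$(vi)$ puts $a$ in the outer slots and the left multiple $y^2x^2\cdot a$ in the middle. When $h$ is the ideal, $a=a^2\cdot(ay^2\cdot x^2)$ from \ref{L3.4}$(v)$, read as $(a\cdot a)V$ with $V=ay^2\cdot x^2\overset{\eqref{e1}}{=}(x^2y^2)a\in Sa$, places $a,a$ against $f,g$ and the left multiple against $h$. When $f$ is the ideal, the same identity rewritten as $a=a^2\cdot V\overset{\eqref{e1}}{=}(Va)\cdot a$ with $V\in Sa$ puts the left multiple in the first slot and $a$ in the other two. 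In each case the bound collapses to $\min\{f(a),g(a),h(a),\frac{1-k}{2}\}=(f\wedge_k g\wedge_k h)(a)$.

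I expect the main obstacle to be exactly the bookkeeping in $(i)\Rightarrow(iii)$: for each position of the left ideal one must locate a decomposition in which the two factors meeting the arbitrary fuzzy subsets are literally $a$ (not $a^2$, nor a parasitic term such as $x^2y^2$, which an arbitrary fuzzy subset cannot bound from below), while the remaining factor is a genuine left multiple of $a$. Checking that Lemma \ref{L3.4}$(v),(vi)$ together with \eqref{e1} really deliver all three required shapes is the delicate point; once this is arranged, the bridging implications $(iii)\Rightarrow(ii)$ and $(ii)\Rightarrow(i)$ follow mechanically.
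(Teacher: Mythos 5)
Your proposal is correct and follows essentially the same route as the paper: for $(i)\Rightarrow(iii)$ it selects, for each position of the fuzzy left ideal, a decomposition of $a$ from Lemma \ref{L3.4} in which the factor met by the ideal is a left multiple of $a$ and the other two factors are $a$ itself (your decompositions coincide with or are left-invertive rewrites of the paper's); $(iii)\Rightarrow(ii)$ is done via characteristic functions with Proposition \ref{P2.5} and Lemma \ref{L2.7}, and $(ii)\Rightarrow(i)$ via the left ideal $Sa$ and $A=B=C=Sa$, exactly as in the paper. If anything, your explicit use of Lemma \ref{L3.4}$(vi)$ for the case where $g$ is the fuzzy left ideal is cleaner than the paper's terse pointer to part $(iv)$, which needs further rewriting before it yields a factorization of the required shape.
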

\begin{proof}
$(i)\Rightarrow (iii)$ \ Assume that $f$ is an $(\in,\in\!\vee q_{k})$-fuzzy left ideal of $S$. Since by Lemma \ref{L3.4} $(iii)$
and \eqref{e1} for every $a\in S$ there are $x,y\in S$ such that $a=(a^2\cdot x^2y^2)a=((x^2y^2\cdot a)a)a$ we have \arraycolsep=.5mm 
\begin{eqnarray*}
((f\circ _{k}g)\circ _{k}h)(a)&=&\bigvee \limits_{a=pq}\left \{ (f\circ _{k}g)(p) \wedge_k h(q)\right\} \\
&=&\bigvee \limits_{a=pq}\left\{ \left( \bigvee \limits_{p=uv}f(u) \wedge_k g(v)\right) \wedge_k h(q)\right\} \\
&=&\bigvee \limits_{a=uv\cdot q}\left\{\left(f(u)\wedge_k g(v)\right)\wedge_k h(q)\right\} \\
&=&\bigvee \limits_{a=((x^{2}y^{2}\cdot a)a)a=uv\cdot q}\!\!\!\!\!\left \{ (f(u)\wedge_k g(v))\wedge_k h(q)\right \} \\
&\geq &\left \{ f(x^{2}y^{2}\cdot a) \wedge_k g(a) \right \} \wedge_k h(a) \\
&\geq &\left(f_k(a)\wedge_k g(a)\right) \wedge_k h(a)\\
&=&((f\wedge _{k} g)\wedge _{k} h)(a)
\end{eqnarray*}
for $(\in,\in\!\vee q_{k})$-fuzzy subsets $g,h$ of $S$. Thus\ $(f\wedge_{k}g)\wedge_{k}h\leq (f\circ_{k}g)\circ_{k}h.$

In the case when $g$ is an $(\in,\in\!\vee q_{k})$-fuzzy left ideal of $S$ the proof is similar but we must use the equation $(iv)$ from Lemma \ref{L3.4}.

In the last case when $h$ is an $(\in,\in\!\vee q_{k})$-fuzzy left ideal we must use the equation $a=a^2\cdot (x^2y^2\cdot a)$ which is a consequence of Lemma \ref{L3.4} $(iv)$ and \eqref{e3}.

\smallskip

$(iii)\Rightarrow (ii)$ \ Assume that $A$ is a left ideal of $S$ and $B,C$ are arbitrary subsets of $S$. Then by Proposition \ref{P2.5}, $(C_{A})_{k}$\ is an $(\in,\in\!\vee q_{k})$-fuzzy left ideal of $S$ and $(C_{B})_{k},(C_{C})_{k}$\ are $(\in,\in\!\vee q_{k})$-fuzzy subsets of $S$. Thus, by Lemma \ref{L2.7} and $(iii)$ we have 
\begin{equation*}
(C_{(A\cap B)\cap C})_{k}=(C_A\wedge_{k}C_B)\wedge_{k} C_C\leq (C_A\circ_{k}C_B)\circ_{k}C_C=(C_{(AB)C})_{k}.
\end{equation*}
Therefore\ $A\cap B\cap C\subseteq (AB)C$.

\smallskip $(ii)\Rightarrow (i)$ \ Since $S$ has a left identity, for every $a\in S$ we have 
\begin{equation*}
S\cdot Sa=eS\cdot Sa\overset{\eqref{e3}}{=}aS\cdot Se\subseteq aS\cdot S\overset{\eqref{e1}}{=}S^2\cdot a=Sa .
\end{equation*}
So, $Sa$ is a left ideal of $S$. Obviously, $a\in Sa$. Thus 
\begin{equation*}
a\in Sa\cap Sa\cap Sa\subseteq (Sa\cdot Sa)\cdot Sa\overset{\eqref{e2}}{=}(S^2\cdot a^2)\cdot Sa\subseteq Sa^{2}\cdot S ,
\end{equation*}
which shows that $S$ is an intra-regular $AG$-groupoid.
\end{proof}

\begin{corollary}
\label{C3.11} For a unitary $AG$-groupoid $S$ the following conditions are equivalent.

\begin{enumerate}
\item[$(i)$] $S$ is intra-regular.
\item[$(ii)$] $A\cap B\cap C\subseteq (AB)C$ for all left ideals of $S$,
\item[$(iii)$] $f\wedge_{k}g\wedge_{k}h\leq (f\circ_{k}g)\circ_{k}h$ \ for
all $(\in,\in\!\vee q_{k})$-fuzzy left ideals of $S$.
\end{enumerate}
\end{corollary}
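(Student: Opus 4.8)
The plan is to obtain Corollary~\ref{C3.11} as an immediate specialization of Theorem~\ref{T3.10}: each of the three conditions stated here is just the restriction of the corresponding condition in the theorem to the case where \emph{all three} of $A,B,C$ (respectively $f,g,h$) are left ideals, rather than merely one of them. I would establish the cycle $(i)\Rightarrow(iii)\Rightarrow(ii)\Rightarrow(i)$.

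For $(i)\Rightarrow(iii)$, suppose $S$ is intra-regular and let $f,g,h$ all be $(\in,\in\!\vee q_{k})$-fuzzy left ideals. Then in particular at least one of them is such an ideal, so the implication $(i)\Rightarrow(iii)$ of Theorem~\ref{T3.10} applies directly and gives $f\wedge_{k}g\wedge_{k}h\leq (f\circ_{k}g)\circ_{k}h$. No new computation is needed.

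For $(iii)\Rightarrow(ii)$, I would reproduce the argument of the theorem. Given left ideals $A,B,C$ of $S$, Proposition~\ref{P2.5} shows that $(C_{A})_{k},(C_{B})_{k},(C_{C})_{k}$ are all $(\in,\in\!\vee q_{k})$-fuzzy left ideals, so condition $(iii)$ may be applied to this triple. Lemma~\ref{L2.7} then converts the resulting fuzzy inequality $(C_{A}\wedge_{k}C_{B})\wedge_{k}C_{C}\leq (C_{A}\circ_{k}C_{B})\circ_{k}C_{C}$ into the set inclusion $A\cap B\cap C\subseteq (AB)C$.

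The only point requiring attention is $(ii)\Rightarrow(i)$, and it is also the reason the weaker hypothesis suffices. The proof of $(ii)\Rightarrow(i)$ in Theorem~\ref{T3.10} never invokes genuinely arbitrary subsets: it first verifies that $Sa$ is a left ideal of $S$ (using the left identity together with \eqref{e3} and \eqref{e1}), and then feeds the \emph{single} left ideal $Sa$ into all three slots, obtaining $a\in Sa\cap Sa\cap Sa\subseteq (Sa\cdot Sa)\cdot Sa\subseteq Sa^{2}\cdot S$. Since here $A=B=C=Sa$ are all left ideals, this is precisely an instance of the restricted condition $(ii)$ of the corollary, so the argument carries over verbatim and forces $S$ to be intra-regular. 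The main obstacle is therefore not any fresh calculation but this bookkeeping observation—that the converse direction only ever uses three copies of one left ideal—which is exactly what lets the stronger-looking corollary drop out of the theorem.
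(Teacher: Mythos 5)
Your proposal is correct and is exactly the argument the paper intends (the paper omits the proof, treating the corollary as immediate from Theorem \ref{T3.10}): the implications $(i)\Rightarrow(iii)\Rightarrow(ii)$ are weakenings of the theorem's statements, and $(ii)\Rightarrow(i)$ goes through because the theorem's proof of that direction only ever instantiates all three slots with the single left ideal $Sa$. Your observation that this bookkeeping point is the only thing requiring attention is precisely right.
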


\begin{theorem}
\label{T3.12} For a unitary $AG$-groupoid $S$ the following are equivalent.

\begin{enumerate}
\item[$(i)$] $S$ is intra-regular.
\item[$(ii)$] $A\cap B\subseteq AB\cap BA$ \ for all left ideals of $S$,
\item[$(iii)$] $f\wedge_{k}g\leq (f\circ_{k}g)\wedge (g\circ_{k}\!f)$ \ for
all $(\in ,\in\!\vee q_{k})$-fuzzy left ideals of $S$.
\end{enumerate}
\end{theorem}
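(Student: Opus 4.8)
The plan is to establish the cycle $(i)\Rightarrow(iii)\Rightarrow(ii)\Rightarrow(i)$, in close parallel with the proof of Theorem \ref{T3.10}, the one new feature being that here the two factors play symmetric roles, which will let a single factorization do double duty.

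For $(i)\Rightarrow(iii)$, fix $a\in S$. Since the outer operation in (iii) is ordinary $\wedge$, it suffices to prove $f\wedge_k g\le f\circ_k g$ and $f\wedge_k g\le g\circ_k f$ separately, and both will come from one factorization of $a$, namely $a=a\cdot za$ supplied by Lemma \ref{L3.4}$(i)$. Writing $(f\circ_k g)(a)=\bigvee_{a=pq}\{f(p)\wedge_k g(q)\}$ and retaining only the term $p=a$, $q=za$ gives $(f\circ_k g)(a)\ge f(a)\wedge_k g(za)$; since $g$ is an $(\in,\in\!\vee q_k)$-fuzzy left ideal, Proposition \ref{P2.2}$(ii)$ yields $g(za)\ge g_k(a)$, and the truncated-min identity $f(a)\wedge_k g_k(a)=(f\wedge_k g)(a)$ closes this inequality. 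The same factorization, now read inside $g\circ_k f$ with $p=a$, $q=za$, gives $(g\circ_k f)(a)\ge g(a)\wedge_k f(za)\ge g(a)\wedge_k f_k(a)=(f\wedge_k g)(a)$. This is the one genuinely delicate step: the content is the observation that $a=a\cdot za$ is symmetric enough to feed both convolutions, because the left factor exposes $a$ itself while the right factor $za$ carries $a$ as its right subfactor, so the left-ideal hypothesis applies to whichever of $f,g$ sits on the right.

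For $(iii)\Rightarrow(ii)$, I specialize to characteristic functions. If $A,B$ are left ideals, then by Proposition \ref{P2.5} the functions $(C_A)_k$ and $(C_B)_k$ are $(\in,\in\!\vee q_k)$-fuzzy left ideals, so (iii) applies to them. Lemma \ref{L2.7} converts the resulting inequality into $(C_{A\cap B})_k\le (C_{AB})_k\wedge(C_{BA})_k$; evaluating at an arbitrary $a\in A\cap B$, the left-hand side is at least $\frac{1-k}{2}>0$, which forces both $(C_{AB})_k(a)>0$ and $(C_{BA})_k(a)>0$, i.e. $a\in AB\cap BA$. Hence $A\cap B\subseteq AB\cap BA$.

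Finally, for $(ii)\Rightarrow(i)$, I recall from the proof of Theorem \ref{T3.10} that $Sa$ is a left ideal of $S$ with $a=ea\in Sa$. Applying (ii) with $A=B=Sa$ gives $Sa\subseteq (Sa)(Sa)$, and the medial law \eqref{e2} together with $S^2=S$ reduces the right-hand side to $S^2\cdot a^2=Sa^2$. Thus $a\in Sa^2$, and Theorem \ref{T3.8} then delivers the intra-regularity of $S$. Of the three implications, only the first carries real mathematical weight; the other two are mechanical translations through Proposition \ref{P2.5}, Lemma \ref{L2.7}, and Theorem \ref{T3.8}, so I expect the choice and verification of the factorization $a=a\cdot za$ in $(i)\Rightarrow(iii)$ to be the main obstacle.
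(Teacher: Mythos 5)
Your proposal is correct, and it differs from the paper's proof in one substantive choice: the factorization used in $(i)\Rightarrow(iii)$. The paper invokes Lemma \ref{L3.4}$(vi)$, writing $a=a(y^2x^2\cdot a)\cdot a$, picks the term $p=a(y^2x^2\cdot a)$, $q=a$ in the supremum, and extracts $(f\circ_k g)(a)\geq f_k(a)\wedge_k g(a)$ by applying the left-ideal property of $f$ (twice) to the left factor. You instead use the much shorter identity $a=a\cdot za$ of Lemma \ref{L3.4}$(i)$ with $p=a$, $q=za$, and apply the left-ideal property of $g$ to the right factor, getting $(f\circ_k g)(a)\geq f(a)\wedge_k g_k(a)=(f\wedge_k g)(a)$; by symmetry the same factorization handles $g\circ_k f$. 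Both arguments are valid because in condition $(iii)$ both $f$ and $g$ are assumed to be $(\in,\in\!\vee q_k)$-fuzzy left ideals, so it does not matter whether the ideal property is consumed on the left or the right factor; your version is leaner and, as you note, reuses a single decomposition for both convolutions. (Be aware, though, that this economy would not transfer to asymmetric statements such as Theorem \ref{T3.10}$(iii)$ or \ref{T3.15}$(iii)$, where only one of the fuzzy subsets is an ideal and the decomposition must be tailored to place that subset in the position where its ideal property can act --- which is presumably why the paper reaches for the heavier identities of Lemma \ref{L3.4}.) Your $(iii)\Rightarrow(ii)$ matches the paper's characteristic-function argument, and your $(ii)\Rightarrow(i)$ routes through $Sa\cdot Sa=S^2a^2=Sa^2$ and Theorem \ref{T3.8}, whereas the paper continues the computation with the paramedial law to land directly in $Sa^2\cdot S$; these are interchangeable.
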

\begin{proof}
$(i)\Rightarrow (iii)$ \ Since $S$ is a unitary intra-regular $AG$-groupoid, by Lemma \ref{L3.4} $(vi)$,
for every $a\in S$ there exist $x,y\in S$ such that $a=a(y^2x^2\cdot a)\cdot a$. Therefore, for all $(\in ,\in\!\vee q_{k})$-fuzzy left ideals of $S$ we have \arraycolsep=.5mm 
\begin{eqnarray*}
(f\circ_{k}g)(a) &=&\bigvee \limits_{a=pq}\!\left\{f(p)\wedge_k g(q)\right\} 
=\bigvee \limits_{a=a(y^2x^2\cdot a)\cdot a=pq}\!\!\!\left\{f(p) \wedge_k g(q)\right\} \\
&\geq &f(a(y^2x^2\cdot a))\wedge_k g(a)\\
&\geq&f_k(a)\wedge_k g(a)=(f\wedge _{k}g)(a) .
\end{eqnarray*}%
Thus $f\wedge_{k}g\leq f\circ_{k}g$. Similarly we can show that $f\wedge_{k}g\leq g\circ_{k}\!f$. Consequently, $f\wedge_{k}g\leq
(f\circ_{k}g)\wedge (g\circ_{k}\!f)$.

\smallskip $(iii) \Rightarrow (ii)$ \ Analogously as in the proof of Theorem \ref{T3.10}.

\smallskip $(ii)\Rightarrow (i)$ \ Since $Sa$ is a left ideal of $S$, for every $a\in S$ we have 
\begin{eqnarray*}
a &\in &Sa\cap Sa\subseteq (Sa\cdot Sa)\cap (Sa\cdot Sa)= Sa\cdot Sa\overset{\eqref{e3}}{=}a^2\cdot S^2 \\
&\overset{\eqref{e4}}{=}&S\cdot a^2S=S^2\cdot a^2S\overset{\eqref{e2}}{=}Sa^{2}\cdot S^2=Sa^{2}\cdot S ,
\end{eqnarray*}
which shows that $S$ is an intra-regular $AG$-groupoid.
\end{proof}

\begin{theorem}
\label{T3.13} For a unitary $AG$-groupoid $S$ the following are equivalent.

\begin{enumerate}
\item[$(i)$] $S$ is intra-regular,
\item[$(ii)$] $A\cap B\cap C=(AB)C$ \ for each bi-ideal $A$ and arbitrary subsets $B,C$ of $S$,
\item[$(iii)$] $f\wedge _{k}g\wedge _{k}h\leq (f\circ_{k}g)\circ_{k}\!h$ \
for each $(\in,\in\!\vee q_{k})$-fuzzy bi-ideal $f$ and all $(\in,\in\!\vee
q_{k})$-fuzzy subsets $g,h$ of $S$,
\item[$(iv)$] $f\wedge _{k}g\wedge _{k}h\leq (f\circ _{k}g)\circ _{k}\!h$ \ for each 
$(\in ,\in\!\vee q_{k})$-fuzzy generalized bi-ideal $f$ and all $(\in,\in\!\vee q_{k})$-fuzzy subsets $g,h$ of $S$.
\end{enumerate}
\end{theorem}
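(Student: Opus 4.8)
The plan is to prove the cycle $(i)\Rightarrow(iv)\Rightarrow(iii)\Rightarrow(ii)\Rightarrow(i)$. The implication $(iv)\Rightarrow(iii)$ is immediate: every $(\in,\in\!\vee q_{k})$-fuzzy bi-ideal is in particular an $(\in,\in\!\vee q_{k})$-fuzzy generalized bi-ideal, so the inequality required in $(iv)$ for all generalized bi-ideals $f$ specializes at once to the one in $(iii)$.

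For $(iii)\Rightarrow(ii)$ I would argue exactly as in Theorem \ref{T3.10}: given a bi-ideal $A$ and arbitrary subsets $B,C$, pass to characteristic functions. By Proposition \ref{P2.5} the function $\left(C_{A}\right)_{k}$ is an $(\in,\in\!\vee q_{k})$-fuzzy bi-ideal while $\left(C_{B}\right)_{k},\left(C_{C}\right)_{k}$ are fuzzy subsets, so $(iii)$ together with Lemma \ref{L2.7} gives $\left(C_{A\cap B\cap C}\right)_{k}=(C_{A}\wedge_{k}C_{B})\wedge_{k}C_{C}\le (C_{A}\circ_{k}C_{B})\circ_{k}C_{C}=\left(C_{(AB)C}\right)_{k}$, hence $A\cap B\cap C\subseteq (AB)C$. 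This inclusion is the substantive content and, as the next step shows, is exactly what is needed to return to $(i)$. For $(ii)\Rightarrow(i)$ I would take $A=B=C=Sa$. As in Theorems \ref{T3.10} and \ref{T3.12}, $Sa$ is a left ideal of $S$, and in a unitary $AG$-groupoid every left ideal $L$ is a bi-ideal (from $LS\subseteq S$ one gets $(LS)L\subseteq SL\subseteq L$ and $L^{2}\subseteq SL\subseteq L$), so $Sa$ is an admissible choice of the bi-ideal $A$. Then $a\in Sa=Sa\cap Sa\cap Sa\subseteq (Sa\cdot Sa)\,Sa\overset{\eqref{e2}}{=}(S^{2}a^{2})\,Sa\subseteq Sa^{2}\cdot S$, so $S$ is intra-regular.

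The real work is $(i)\Rightarrow(iv)$, and this is where I expect the main obstacle. Fix $a$, a fuzzy generalized bi-ideal $f$, and arbitrary fuzzy subsets $g,h$. Since $g,h$ are completely unrestricted, the only decompositions of $a$ that can force $g$ and $h$ to be evaluated at $a$ itself are of the shape $a=(u\,a)\,a$; for any such decomposition one has $\big((f\circ_{k}g)\circ_{k}h\big)(a)\ge f(u)\wedge_{k}g(a)\wedge_{k}h(a)$, and it then remains to secure $f(u)\ge f_{k}(a)$. By Proposition \ref{P2.2}$(v)$ this holds as soon as $u$ has the sandwich form $u=(a\cdot w)\,a$, because then $f(u)=f(aw\cdot a)\ge f(a)\wedge_{k}f(a)=f_{k}(a)$; one may equally take $u=(a\cdot w)\,a^{2}$ and invoke $f(a^{2})=f(a)$ from Corollary \ref{C3.6}. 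With such a $u$ in hand the desired inequality $\big((f\circ_{k}g)\circ_{k}h\big)(a)\ge f_{k}(a)\wedge_{k}g(a)\wedge_{k}h(a)=(f\wedge_{k}g\wedge_{k}h)(a)$ follows, closing the cycle.

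The hard part is therefore to produce, from intra-regularity alone, a factorization $a=\big(((a\,w)\,a)\,a\big)\,a$ for a suitable $w$ (equivalently, $a\in a^{2}\cdot\big((aS)\,a\big)$): the leftmost factor must sit in genuine sandwich form while the middle and last factors are \emph{exactly} $a$. This is the analogue of the key step of Theorems \ref{T3.10} and \ref{T3.12}, but there the leftmost factor only had to be controlled by the left-ideal property $f(x^{2}y^{2}\cdot a)\ge f_{k}(a)$, whereas the generalized-bi-ideal property controls $f$ only on triple products carrying $a$ at both outer positions. The factorizations read off directly from Lemma \ref{L3.4}$(iii)$–$(vi)$ (e.g. $a=((x^{2}y^{2}\cdot a)\,a)\,a$) place $g,h$ correctly at $a$ but leave an uncontrolled witness factor such as $x^{2}$ in one outer slot. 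I would obtain the required sandwich factorization by iterating the identities of Lemma \ref{L3.4}, starting from the sandwich identity $(vi)$ $a=a(y^{2}x^{2}\cdot a)\cdot a$ and feeding it back into its own leftmost occurrence of $a$; verifying that this can be done while keeping the two trailing factors equal to $a$ is the calculation I expect to demand the most care.
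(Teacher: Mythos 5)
Your architecture --- the cycle $(i)\Rightarrow(iv)\Rightarrow(iii)\Rightarrow(ii)\Rightarrow(i)$, the characteristic-function transfer for $(iii)\Rightarrow(ii)$, and the choice $A=B=C=Sa$ for $(ii)\Rightarrow(i)$ --- is the paper's, and your diagnosis of the decisive step in $(i)\Rightarrow(iv)$ is exactly right: because $g,h$ are unrestricted, one must exhibit $a=(ua\cdot a)\cdot a$ with $u$ itself in sandwich form $(aw)a$, so that $f(u)=f(aw\cdot a)\geq f(a)\wedge_k f(a)=f_k(a)$ by Proposition~\ref{P2.2}$(v)$. (Your side remarks are also fine: left ideals are bi-ideals, so $Sa$ is an admissible $A$, whereas the paper instead verifies $(Sa\cdot S)Sa\subseteq Sa$ directly; and, like the paper, you prove only the inclusion in $(ii)$ --- the stated equality cannot hold for arbitrary $B,C$ and is evidently a typo for $\subseteq$.)

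The genuine gap is that the required factorization is never produced: you only conjecture that it can be extracted by iterating Lemma~\ref{L3.4} and explicitly defer the verification. Without it the only nontrivial implication is unproved. The calculation does work, and the paper's route is to substitute the intra-regularity witness $a=xa^{2}\cdot y$ into the \emph{middle} factor of Lemma~\ref{L3.4}$(vi)$ (not the leftmost one, as you propose --- that variant leaves the outer slots occupied by witnesses rather than by $a$) and then rearrange:
\begin{align*}
a&=a(y^{2}x^{2}\cdot a)\cdot a=a\bigl(y^{2}x^{2}\cdot (xa^{2}\cdot y)\bigr)\cdot a
\overset{\eqref{e4}}{=}a\bigl(xa^{2}\cdot (y^{2}x^{2}\cdot y)\bigr)\cdot a\\
&\overset{\eqref{e2}}{=}a\bigl((x\cdot y^{2}x^{2})\cdot a^{2}y\bigr)\cdot a
\overset{\eqref{e4}}{=}a\bigl(a^{2}\cdot (x\cdot y^{2}x^{2})y\bigr)\cdot a
\overset{\eqref{e4}}{=}a^{2}\bigl(a\cdot (x\cdot y^{2}x^{2})y\bigr)\cdot a\\
&\overset{\eqref{e1}}{=}\Bigl(\bigl(a\cdot(x\cdot y^{2}x^{2})y\bigr)a\cdot a\Bigr)\cdot a ,
\end{align*}
which is precisely $a=\bigl((aw)a\cdot a\bigr)\cdot a$ with $w=(x\cdot y^{2}x^{2})y$. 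Once this identity is in hand, your argument closes exactly as you describe.
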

\begin{proof}
$(i)\Rightarrow (iv)$ \ By Lemma \ref{L3.4} $(vi)$ for every $a\in S$ there exist $x,y\in S$ such that $a=a(y^2x^2\cdot a)\cdot a$. Hence, using \eqref{e4}, we obtain \arraycolsep=.5mm 
\begin{eqnarray*}
a &=&a(y^{2}x^{2}\cdot a)\cdot a=a(y^{2}x^{2}\cdot (xa^{2}\cdot y))\cdot a 
\overset{\eqref{e4}}{=}a(xa^{2}\cdot (y^{2}x^{2}\cdot y))\cdot a \\
&\overset{\eqref{e2}}{=}&a((x\cdot y^{2}x^{2})\cdot a^{2}y)\cdot a \overset{\eqref{e4}}{=}a(a^{2}\cdot (x\cdot y^{2}x^{2})y)\cdot a \\
&\overset{\eqref{e4}}{=}&a^{2}(a\cdot (x\cdot y^{2}x^{2})y)\cdot a \overset{%
\eqref{e1}}{=}((a\cdot(x\cdot y^{2}x^{2})y)a\cdot a)\cdot a.
\end{eqnarray*}
Thus, for an arbitrary $(\in ,\in\!\vee q_{k})$-fuzzy generalized bi-ideal $f$ and all $(\in ,\in\!\vee q_{k})$-fuzzy subsets $g,h$ of $S$ we have 
\begin{eqnarray*}
((f\circ _{k}g)\circ _{k}\!h)(a) &=&\bigvee\limits_{a=pq}\left\{(f\circ_{k}g)(p)\wedge_k h(q)\right\}\\
&=&\bigvee \limits_{a=pq}\left\{\bigvee\limits_{p=uv}\left( f(u)\wedge_k g(v)\right)\wedge_k h(q)\right\} \\
&=&\bigvee \limits_{a=uv\cdot q}\left\{f(u)\wedge_k g(v)\wedge_k h(q)\right\} \\
&=&\bigvee \limits_{a=([a\cdot (x\cdot y^{2}x^{2})y]a\cdot a)\cdot a=uv\cdot
q}\!\!\!\!\!\!\!\!\!\! \left\{f(u)\wedge_k g(v)\wedge_k h(q)\right\} \\
&\geq &f([\,a\cdot (x\cdot y^{2}x^{2})y]a)\wedge_k g(a)\wedge_k h(a) \\
&\geq &f_k(a)\wedge_k g(a)\wedge_k h(a) =(f\wedge _{k}g\wedge_{k}h)(a) .
\end{eqnarray*}
So, $f\wedge_{k}g\wedge_{k}h\leq (f\circ_{k}g)\circ_{k}\!h$.

\smallskip

$(iv)\Rightarrow (iii)$ \ Obvious.

\smallskip

$(iii)\Rightarrow (ii)$ \ Similarly as in the proof of Theorem \ref{T3.10}.

\smallskip

$(ii)\Rightarrow (i)$ \ Since $S$ has a left identity, so we have 
$$(Sa\cdot S)Sa\subseteq S^{2}\cdot Sa\overset{\eqref{e3}}{=}aS\cdot S^{2}\subseteq aS\cdot S\overset{\eqref{e1}}{=}S^{2}\cdot a\subseteq Sa$$ for every $a\in S$. This means that $Sa$ is a bi-ideal of $S$. Thus, by $(ii)$ and \eqref{e2}, we obtain $a\in Sa\cap Sa\subseteq (Sa\cdot Sa)\cdot Sa=(S^{2}\cdot a^{2})\cdot Sa\subseteq Sa^{2}\cdot S$. Hence $S$ intra-regular.
\end{proof}

\begin{corollary}
\label{C3.14} For a unitary $AG$-groupoid $S$ the following are equivalent.

\begin{enumerate}
\item[$(i)$] $S$ is intra-regular,
\item[$(ii)$] $A\cap B\cap C=(AB)C$ \ for all bi-ideals of $S$,
\item[$(iii)$] $f\wedge _{k}g\wedge _{k}h\leq (f\circ_{k}g)\circ_{k}\!h$ \
for all $(\in ,\in\!\vee q_{k})$-fuzzy bi-ideals of $S,$\ 
\item[$(iv)$] $f\wedge _{k}g\wedge _{k}h\leq (f\circ _{k}g)\circ _{k}\!h$ \
for all $(\in ,\in \!\vee q_{k})$-fuzzy generalized bi-ideals of $S$.
\end{enumerate}
\end{corollary}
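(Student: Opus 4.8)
The plan is to deduce Corollary \ref{C3.14} directly from Theorem \ref{T3.13} rather than reprove the equivalences from scratch, exploiting that each condition of the corollary is a \emph{restriction} of the corresponding condition of the theorem. First I would note that every bi-ideal is a subset of $S$ and every $(\in,\in\!\vee q_k)$-fuzzy bi-ideal (generalized bi-ideal) is in particular an $(\in,\in\!\vee q_k)$-fuzzy subset of $S$. Consequently, specializing the ``arbitrary'' arguments $B,C$ (respectively $g,h$) in Theorem \ref{T3.13} to be bi-ideals (respectively fuzzy bi-ideals / generalized bi-ideals), each of the theorem's conditions $(ii)$, $(iii)$, $(iv)$ implies the corresponding condition of the corollary, the equalities being inherited verbatim. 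Since by Theorem \ref{T3.13} its condition $(i)$ is equivalent to each of $(ii)$, $(iii)$, $(iv)$, this already yields $(i)\Rightarrow(ii)$, $(i)\Rightarrow(iii)$ and $(i)\Rightarrow(iv)$ in the corollary, with full equalities where claimed.

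For the converse it suffices to show that each of $(ii)$, $(iii)$, $(iv)$ forces intra-regularity. The key observation is that the proof of $(ii)\Rightarrow(i)$ in Theorem \ref{T3.13} uses only the \emph{single} bi-ideal $Sa$, placed simultaneously in all three slots: since $S$ is unitary, $Sa$ is a bi-ideal, and taking $A=B=C=Sa$ one reads off $a\in Sa\cap Sa\cap Sa=(Sa\cdot Sa)\cdot Sa\overset{\eqref{e2}}{=}(S^2\cdot a^2)\cdot Sa\subseteq Sa^2\cdot S$, whence $S$ is intra-regular. Thus the weaker hypothesis of the corollary (equality for \emph{all} bi-ideals, in particular for $A=B=C=Sa$) already powers this computation, giving $(ii)\Rightarrow(i)$.

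For the fuzzy statements I would run the same idea through the characteristic-function correspondence. As $Sa$ is a bi-ideal, Proposition \ref{P2.5} gives that $(C_{Sa})_k$ is an $(\in,\in\!\vee q_k)$-fuzzy bi-ideal, hence a fortiori an $(\in,\in\!\vee q_k)$-fuzzy generalized bi-ideal. Feeding $f=g=h=(C_{Sa})_k$ into $(iii)$ (respectively into $(iv)$) and applying Lemma \ref{L2.7} to rewrite both sides as characteristic functions, the inequality collapses to $(C_{Sa\cap Sa\cap Sa})_k\leq (C_{(Sa\cdot Sa)\cdot Sa})_k$, that is $Sa\subseteq(Sa\cdot Sa)\cdot Sa\subseteq Sa^2\cdot S$; since $a\in Sa$ this again gives intra-regularity. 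Hence $(iii)\Rightarrow(i)$ and $(iv)\Rightarrow(i)$, closing the equivalence.

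The only point requiring care---the main obstacle, though a mild one---is keeping the uses of inclusions and equalities separate. The forward direction must deliver the \emph{full equality} in $(ii)$, which it does because that equality is inherited intact from Theorem \ref{T3.13}; the backward direction, by contrast, consumes only the \emph{inclusion} $A\cap B\cap C\subseteq(AB)C$ evaluated on $A=B=C=Sa$. Recognizing that this one bi-ideal $Sa$ suffices for every converse implication is exactly what lets the corollary be read off from the theorem with essentially no new computation.
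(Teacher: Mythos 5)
Your proposal is correct and matches the paper's intent exactly: the paper states Corollary \ref{C3.14} without proof as an immediate specialization of Theorem \ref{T3.13}, the forward directions following because bi-ideals and fuzzy (generalized) bi-ideals are particular subsets and fuzzy subsets, and the converses following because the theorem's own proof of $(ii)\Rightarrow(i)$ only ever invokes the single bi-ideal $Sa$ in all slots. Your additional care in routing the fuzzy converses through $(C_{Sa})_k$ via Proposition \ref{P2.5} and Lemma \ref{L2.7} is exactly the mechanism the paper uses elsewhere (e.g.\ in the proof of Theorem \ref{T3.10}).
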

\begin{theorem}
\label{T3.15} For a unitary $AG$-groupoid $S$ the following are equivalent.

\begin{enumerate}
\item[$(i)$] $S$ is intra-regular.
\item[$(ii)$] $A\cap B\subseteq AB\cap BA$ \ for any bi-ideal $A$ and any subset $B$ of $S$,
\item[$(iii)$] $f\wedge _{k}g\leq (f\circ _{k}g)\wedge (g\circ _{k}\!f)$ \ for any 
$(\in ,\in \!\vee q_{k})$-fuzzy bi-ideal $f$ and any $(\in ,\in\!\vee q_{k})$-fuzzy subset $g$ of $S$.
\end{enumerate}
\end{theorem}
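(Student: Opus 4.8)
The plan is to follow the same three-part cyclic structure used in Theorems \ref{T3.10} and \ref{T3.12}, proving $(i)\Rightarrow(iii)\Rightarrow(ii)\Rightarrow(i)$. The implications $(iii)\Rightarrow(ii)$ and $(ii)\Rightarrow(i)$ should be routine adaptations of earlier arguments, so the real work is in $(i)\Rightarrow(iii)$.

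For $(i)\Rightarrow(iii)$, I would start from the decomposition in Lemma \ref{L3.4}$(vi)$, namely $a=a(y^2x^2\cdot a)\cdot a$, exactly as in Theorem \ref{T3.12}. The idea is to write $a$ as a product $a=pq$ where the first factor $p=a(y^2x^2\cdot a)$ has the shape ``(something)$\,a$'', so that the bi-ideal property of $f$ can be applied to bound $f(p)$ from below by $f_k(a)$, while the second factor is $q=a$. Then
\begin{equation*}
(f\circ_k g)(a)=\bigvee_{a=pq}\{f(p)\wedge_k g(q)\}\geq f(a(y^2x^2\cdot a))\wedge_k g(a)\geq f_k(a)\wedge_k g(a)=(f\wedge_k g)(a),
\end{equation*}
where the middle inequality uses Proposition \ref{P2.2}$(v)$ (applied to the generalized-bi-ideal form $f(xy\cdot z)\geq f(x)\wedge_k f(z)$ with both outer arguments equal to $a$, together with $f(a)=f(a^2)$ from Corollary \ref{C3.6}) to conclude $f(a(y^2x^2\cdot a))\geq f_k(a)$. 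This gives $f\wedge_k g\leq f\circ_k g$. To obtain the companion inequality $f\wedge_k g\leq g\circ_k\!f$, I would seek a second decomposition $a=p'q'$ in which the bi-ideal factor $f$ lands on the \emph{right}, i.e.\ $q'$ has the form ``(something)$\,a$'' and $p'$ can be controlled by $g$; this is where I would again lean on Lemma \ref{L3.4}, probably rewriting via \eqref{e1} or \eqref{e4} to swap the roles of the two factors. Combining the two inequalities yields $f\wedge_k g\leq(f\circ_k g)\wedge(g\circ_k\!f)$.

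For $(iii)\Rightarrow(ii)$, I would mimic Theorem \ref{T3.10}: given a bi-ideal $A$ and an arbitrary subset $B$, Proposition \ref{P2.5}$(iii)$ makes $(C_A)_k$ an $(\in,\in\!\vee q_k)$-fuzzy bi-ideal and $(C_B)_k$ an arbitrary fuzzy subset, so applying $(iii)$ to these characteristic functions and translating back through Lemma \ref{L2.7}$(i)$ and $(iii)$ converts the fuzzy inequality $C_A\wedge_k C_B\leq(C_A\circ_k C_B)\wedge(C_B\circ_k\!C_A)$ into the set inclusion $A\cap B\subseteq AB\cap BA$. For $(ii)\Rightarrow(i)$, I would reuse the observation from Theorem \ref{T3.13} that $Sa$ is a bi-ideal of $S$, so taking $A=B=Sa$ gives $a\in Sa\cap Sa\subseteq Sa\cdot Sa$, and then the identities \eqref{e2}, \eqref{e3}, \eqref{e4} reduce $Sa\cdot Sa$ to $Sa^2\cdot S$, exactly as in the $(ii)\Rightarrow(i)$ step of Theorem \ref{T3.12}.

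The main obstacle will be securing the second inequality $f\wedge_k g\leq g\circ_k\!f$ in the hard direction. For the $f\circ_k g$ bound, Lemma \ref{L3.4}$(vi)$ puts the bi-ideal factor on the left in a convenient ``$(\cdot\,a)$'' form; but getting $f$ onto the right-hand factor with the same bi-ideal estimate is less automatic, since the generalized-bi-ideal inequality is not symmetric in its two factors. I expect to need a careful application of the left invertive and paramedial laws to re-bracket $a=a(y^2x^2\cdot a)\cdot a$ (or an alternative decomposition from Lemma \ref{L3.4}) so that the right factor acquires the required shape while the left factor stays controllable by the arbitrary subset $g$; verifying that this rearranged factor still matches the generalized-bi-ideal pattern $f(xy\cdot z)\geq f(x)\wedge_k f(z)$ is the delicate point.
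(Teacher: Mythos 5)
Your overall architecture $(i)\Rightarrow(iii)\Rightarrow(ii)\Rightarrow(i)$ matches the paper's, and your $(iii)\Rightarrow(ii)$ (characteristic functions plus Lemma \ref{L2.7}) and $(ii)\Rightarrow(i)$ ($Sa$ is a bi-ideal, then reduce $Sa\cdot Sa$ to $Sa^2\cdot S$) are exactly what the paper does. The gap is in the key estimate of $(i)\Rightarrow(iii)$. You take the decomposition $a=a(y^2x^2\cdot a)\cdot a$ from Lemma \ref{L3.4}$(vi)$ and claim $f(a(y^2x^2\cdot a))\geq f_k(a)$ via the generalized-bi-ideal inequality ``with both outer arguments equal to $a$''. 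But $a(y^2x^2\cdot a)$ means $a\cdot\bigl(y^2x^2\cdot a\bigr)$: its outermost product is bracketed as $x(yz)$, not $(xy)z$, so it does not match the pattern $f(xy\cdot z)\geq f(x)\wedge_k f(z)$ of Proposition \ref{P2.2}$(v)$. Rewriting via \eqref{e4} gives $a(y^2x^2\cdot a)=y^2x^2\cdot a^2$, and every re-bracketing of this by \eqref{e1}--\eqref{e5} leaves an uncontrolled element such as $y^2x^2$ in an outer position, while the subgroupoid inequality $f(uv)\geq f(u)\wedge_k f(v)$ produces the uncontrolled term $f(y^2x^2)$. This decomposition works in Theorem \ref{T3.12} only because there $f$ is a \emph{left} ideal, where $f(uv)\geq f_k(v)$ can be applied twice to dig through the nesting; for bi-ideals it breaks down.

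The paper instead uses Lemma \ref{L3.4}$(vii)$ (miscited as $(vi)$ in its proof): writing $z=uv$ and $z'=vu$, one has $a=a^2z\cdot a^2\overset{\eqref{e5}}{=}a^2\cdot za^2\overset{\eqref{e1}}{=}(za^2\cdot a)a\overset{\eqref{e5}}{=}(a^2z'\cdot a)a$. Here the left factor $a^2z'\cdot a$ is literally of the form $xy\cdot z$ with $x=a^2$ and $z=a$, so $f(a^2z'\cdot a)\geq f(a^2)\wedge_k f(a)\geq f_k(a)$ using $f(a)=f(a^2)$ (Corollary \ref{C3.6}), and the bound $(f\circ_k g)(a)\geq f_k(a)\wedge_k g(a)$ follows. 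You should replace your decomposition by this one. Your suspicion that the companion inequality $f\wedge_k g\leq g\circ_k\!f$ is the delicate point is reasonable (the paper disposes of it with a bare ``similarly'', and it needs a decomposition $a=a\cdot q$ with the $f$-controlled factor on the right), but the same obstruction you anticipate there already defeats your first inequality as written.
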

\begin{proof}
$(i)\Rightarrow (iii)$ \ Since $S$ is a unitary intra-regular $AG$-groupoid and for $z\in S$ there exist $u,v$ in $S$ such that $z=uv$. Then from Lemma \ref{L3.4} $(vi)$
\begin{equation*}
a=a^{2}z\cdot a^{2}\overset{\eqref{e5}}{=}a^{2}\cdot za^{2}\overset{\eqref{e1}}{=}(za^{2}\cdot a)a\overset{\eqref{e5}}{=}(a^{2}z^{\prime }\cdot a)a ,
\end{equation*}
where $z^{\prime }=vu$. Therefore, for an arbitrary $(\in ,\in \!\vee q_{k})$-fuzzy bi-ideal $f$ of $S$ and an arbitrary $(\in ,\in \!\vee q_{k})$-fuzzy subset $g$ of $S$ we have \arraycolsep=.5mm 
\begin{eqnarray*}
(f\circ _{k}g)(a) &=&\bigvee\limits_{a=pq}\!\left\{ f(p)\wedge_k g(q)\right\} 
=\bigvee\limits_{a=(a^{2}z^{\prime }\cdot a)a=pq}\!\!\!\!\!\!\left\{ f(p)\wedge_k g(q)\right\} \\
&\geq &f(a^{2}z^{\prime }\cdot a)\wedge_k g(a) \\
&\geq &f_k(a^{2})\wedge_k g(a) =(f\wedge _{k}g)(a).
\end{eqnarray*}
Thus $f\wedge _{k}g\leq f\circ _{k}g$. Similarly we can show that $f\wedge _{k}g\leq g\circ _{k}\!f$. Consequently, $f\wedge _{k}g\leq (f\circ _{k}g)\wedge (g\circ _{k}\!f)$.

$(iii)\Rightarrow (ii)$ \ Analogously as in the proof of Theorem \ref{T3.10}.

$(ii)\Rightarrow (i)$ \ Analogously as in the proof of Theorem \ref{T3.12}.
\end{proof}

\begin{corollary}
For a unitary $AG$-groupoid $S$ the following are equivalent.
\end{corollary}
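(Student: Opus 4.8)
The statement is evidently the ``all bi-ideals'' specialization of Theorem~\ref{T3.15}, exactly as Corollary~\ref{C3.14} specializes Theorem~\ref{T3.13}; I read the three equivalent conditions as $(i)$ $S$ is intra-regular, $(ii)$ $A\cap B\subseteq AB\cap BA$ for all bi-ideals $A,B$ of $S$, and $(iii)$ $f\wedge_k g\leq (f\circ_k g)\wedge(g\circ_k\!f)$ for all $(\in,\in\!\vee q_k)$-fuzzy bi-ideals $f,g$ of $S$. The plan is to obtain the corollary as a direct harvest of Theorem~\ref{T3.15}, closing the cycle $(i)\Rightarrow(iii)\Rightarrow(ii)\Rightarrow(i)$ with machinery already in place, so that essentially no new computation is required.

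For $(i)\Rightarrow(iii)$ I would observe that condition $(iii)$ here is merely a restriction of condition $(iii)$ of Theorem~\ref{T3.15}: there the inequality is asserted for every fuzzy bi-ideal $f$ paired with an \emph{arbitrary} fuzzy subset $g$, so it holds in particular whenever $g$ is itself a fuzzy bi-ideal. Thus intra-regularity of $S$, via Theorem~\ref{T3.15}, instantly yields the inequality for all pairs of fuzzy bi-ideals.

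For $(iii)\Rightarrow(ii)$ I would run the characteristic-function argument used in the $(iii)\Rightarrow(ii)$ step of Theorem~\ref{T3.10}. Given bi-ideals $A,B$, Proposition~\ref{P2.5}$(iii)$ turns $(C_A)_k$ and $(C_B)_k$ into $(\in,\in\!\vee q_k)$-fuzzy bi-ideals, so $(iii)$ applies to the pair; translating $\wedge_k$ and $\circ_k$ by Lemma~\ref{L2.7} converts the resulting inequality $(C_{A\cap B})_k\leq (C_{AB})_k\wedge(C_{BA})_k$ into the inclusion $A\cap B\subseteq AB\cap BA$. For $(ii)\Rightarrow(i)$ the key fact, already checked inside the proof of Theorem~\ref{T3.13}, is that $Sa$ is a bi-ideal of $S$ for every $a\in S$. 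Taking $A=B=Sa$ in $(ii)$ gives $a\in Sa\cap Sa\subseteq (Sa\cdot Sa)\cap(Sa\cdot Sa)=Sa\cdot Sa$, and I would then reduce this by the same paramedial/medial chain as in the $(ii)\Rightarrow(i)$ step of Theorem~\ref{T3.12}, namely $Sa\cdot Sa\overset{\eqref{e3}}{=}a^2\cdot S^2\overset{\eqref{e4}}{=}S\cdot a^2S=S^2\cdot a^2S\overset{\eqref{e2}}{=}Sa^2\cdot S^2=Sa^2\cdot S$, placing $a\in Sa^2\cdot S$ and so making $S$ intra-regular.

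I do not anticipate a genuine obstacle, since every tool has already been built in the preceding results; the only points demanding care are verifying in $(iii)\Rightarrow(ii)$ that characteristic functions of bi-ideals are indeed fuzzy bi-ideals (Proposition~\ref{P2.5}$(iii)$, so that $(iii)$ is legitimately applicable), and confirming in $(ii)\Rightarrow(i)$ that $Sa$ really is a bi-ideal so that the specialization $A=B=Sa$ of $(ii)$ is permitted.
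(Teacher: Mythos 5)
Your reconstruction of the three conditions and your derivation are correct, and they follow exactly the route the paper intends: the corollary is the ``all bi-ideals'' specialization of Theorem~\ref{T3.15}, obtained by restricting $(iii)$ to fuzzy bi-ideal pairs, pulling back to sets via Proposition~\ref{P2.5} and Lemma~\ref{L2.7}, and closing the cycle with $A=B=Sa$ as in Theorems~\ref{T3.12} and~\ref{T3.13}. The paper leaves this proof implicit (just as it does for Corollary~\ref{C3.14}), so your argument matches its approach.
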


\begin{enumerate}
\item[$(i)$] $S$ is intra-regular.
\item[$(ii)$] $A\cap B\subseteq AB\cap BA$ \ for all bi-ideals of $S$,
\item[$(iii)$] $f\wedge _{k}g\leq (f\circ _{k}g)\wedge (g\circ _{k}\!f)$ \
for all $(\in ,\in \!\vee q_{k})$-fuzzy bi-ideals of $S$.
\end{enumerate}

\medskip

Using the same method as in the proofs of Theorems \ref{T3.10} and \ref{T3.12} we can obtain the following characterization of unitary intra-regular $AG$-groupoids by their left ideals and bi-ideals.

\begin{theorem}
\label{T3.17} A unitary $AG$-groupoid $S$ is intra-regular if and only if one of the following conditions is satisfied:

\begin{enumerate}
\item[$(i)$] $A=A^2$ for all left ideals of $S$,
\item[$(ii)$] $A=A^2A$ for all left ideals of $S$,
\item[$(iii)$] $f_{k}\leq f\circ_{k}f$ \ for all $(\in,\in\!\vee q_{k})$-fuzzy left ideals of $S$,
\item[$(iv)$] $f_{k}\leq (f\circ_{k}f)\circ_{k}f$ \ for all $(\in,\in\!\vee q_{k})$-fuzzy left ideals of $S$.
\end{enumerate}
\end{theorem}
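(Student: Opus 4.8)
The plan is to prove Theorem~\ref{T3.17} by establishing the cyclic implications $(i)\Rightarrow(ii)\Rightarrow(iv)\Rightarrow(iii)\Rightarrow(i)$, or more efficiently by exploiting the parallel structure between the set-level conditions $(i),(ii)$ and the fuzzy-level conditions $(iii),(iv)$, mirroring exactly the method already used in Theorems~\ref{T3.10} and \ref{T3.12}. The backbone of every forward implication is the key fact from Lemma~\ref{L3.4} that in a unitary intra-regular $AG$-groupoid each $a$ can be written in a factored form: specifically $(iii)$ gives $a=(a^2\cdot x^2y^2)a$, which rewritten via \eqref{e1} exhibits $a$ as a product lying in a self-product of the relevant left ideal. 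These factorizations are precisely what let one squeeze $f_k(a)$ underneath a composition $f\circ_k f$ at the chosen decomposition point.

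First I would prove $(i)\Rightarrow(iii)$ directly. Assume $f$ is an $(\in,\in\!\vee q_k)$-fuzzy left ideal. For $a\in S$ pick $x,y$ with $a=(a^2\cdot x^2y^2)a=((x^2y^2\cdot a)a)a$ from Lemma~\ref{L3.4}$(iii)$ and \eqref{e1}. Then
\begin{equation*}
(f\circ_k f)(a)=\bigvee_{a=pq}\{f(p)\wedge_k f(q)\}\geq f((x^2y^2\cdot a)a)\wedge_k f(a)\geq f_k(a)\wedge_k f_k(a)=f_k(a),
\end{equation*}
using Proposition~\ref{P2.2}$(ii)$ twice (the left-ideal property bounds $f$ of a product from below by $f_k$ of the right factor). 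Hence $f_k\leq f\circ_k f$. For $(iv)$ I would instead use the triple factorization $a=((x^2y^2\cdot a)a)a$ read as a product $u\cdot a$ with $u=((x^2y^2\cdot a)a)$, so that $((f\circ_k f)\circ_k f)(a)\geq (f\circ_k f)(u)\wedge_k f(a)\geq f_k(u)\wedge_k f_k(a)\geq f_k(a)$, again applying the left-ideal inequality at each nested product. The implications $(iii)\Rightarrow(i)$ and $(iv)\Rightarrow(i)$ then pass through the set level: taking $f=(C_A)_k$ for a left ideal $A$ and invoking Proposition~\ref{P2.5}$(ii)$ together with Lemma~\ref{L2.7}$(iii)$ converts $f_k\leq f\circ_k f$ into $A\subseteq A^2$, while the reverse containment $A^2\subseteq A$ is automatic since $A$ is in particular an $AG$-subgroupoid (a left ideal satisfies $AA\subseteq SA\subseteq A$); this gives $(i)$ at the set level, and identically $(iv)$ yields $A=A^2A$.

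To close the loop at the set level I would verify $(i)\Leftrightarrow$ intra-regularity concretely using the witness left ideal $Sa$. As computed inside the proofs of Theorems~\ref{T3.10} and \ref{T3.12}, $Sa$ is a left ideal of the unitary $AG$-groupoid $S$ and $a\in Sa$. If $Sa=(Sa)^2$ then, by the medial law \eqref{e2} and paramedial manipulations, $a\in Sa=(Sa)(Sa)\overset{\eqref{e2}}{=}(S^2\cdot a^2)\subseteq Sa^2\cdot S$ — or the analogous chain already exhibited in Theorem~\ref{T3.12} — which is exactly intra-regularity via Theorem~\ref{T3.8}. The main obstacle I anticipate is purely bookkeeping rather than conceptual: in the fuzzy computations for $(iv)$ one must be careful that the nested suprema over decompositions $a=uv\cdot q$ are taken at the correct grouping so that the specific factorization from Lemma~\ref{L3.4} is a legal term in the join, and one must consistently track the truncation constant $\tfrac{1-k}{2}$ hidden inside $\wedge_k$ and $f_k$ so that repeated applications of $f_k(a)\wedge_k f_k(a)=f_k(a)$ remain valid. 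No genuinely new idea beyond the lemma's factorizations is needed; the difficulty is ensuring each associativity regrouping is justified by \eqref{e1}--\eqref{e5} and not by ordinary associativity, which fails here.
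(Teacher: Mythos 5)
Your proposal is correct and follows essentially the route the paper itself intends: the paper gives no separate proof of Theorem~\ref{T3.17}, saying only that it follows by the method of Theorems~\ref{T3.10} and \ref{T3.12}, and your argument (the factorization $a=((x^{2}y^{2}\cdot a)a)a$ from Lemma~\ref{L3.4}~$(iii)$ with \eqref{e1}, transfer to the set level via Proposition~\ref{P2.5} and Lemma~\ref{L2.7}, the automatic containments $A^{2}\subseteq A$ and $A^{2}A\subseteq A$ for left ideals, and the witness left ideal $Sa$ combined with Theorem~\ref{T3.8}) is precisely that method. The only blemish is notational: your ``$(i)\Rightarrow(iii)$'' is really ``intra-regularity $\Rightarrow(iii)$'', since in the theorem intra-regularity is not item $(i)$.
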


The above theorem is also valid if we replace the equation $A=A^2$ by the inclusion $A\subseteq A^2$ and left ideals by bi-ideals or generalized bi-ideals.

\section{Quasi-ideals of intra-regular AG-groupoids}

By a \textit{quasi-ideal} of an $AG$-groupoid $S$ we mean a nonempty subset $Q$ of $S$ such that $SQ\cap QS\subseteq Q$. Obviously each left (right) ideal is a quasi-ideal.

\begin{proposition}\label{P4.1}
If $S$ is a unitary $AG$-groupoid, then $Sa\cap aS$, $Sa$ and $Sa^2$ are quasi-ideals for every $a\in S$.
\end{proposition}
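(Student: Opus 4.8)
The plan is to reduce each of the three assertions to the remark, made just after the definition of a quasi-ideal, that every left ideal and every right ideal is automatically a quasi-ideal. Thus for each set it suffices to exhibit it as a (nonempty) one-sided ideal, or as an intersection of such.

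Two of the three cases are already settled in earlier proofs. In the proof of the implication $(ii)\Rightarrow(i)$ of Theorem~\ref{T3.10} it is shown that $S\cdot Sa\subseteq Sa$, so $Sa$ is a left ideal; it is nonempty since $a=ea\in Sa$. Similarly, the opening computation in the proof of Theorem~\ref{T3.8} gives $Sa^2\cdot S\subseteq Sa^2$, so $Sa^2$ is a right ideal, and $a^2=ea^2\in Sa^2$ shows it is nonempty. Hence $Sa$ and $Sa^2$ are quasi-ideals, with no further work needed.

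The only new ingredient is $Q=Sa\cap aS$. First I would secure nonemptiness: $a^2=a\cdot a$ lies in both $Sa$ and $aS$, so $a^2\in Q$. Next I would observe that $aS$ is itself a left ideal, since by \eqref{e4} we have $s\cdot(ax)=a\cdot(sx)\in aS$ for all $s,x\in S$, whence $S\cdot aS\subseteq aS$. Because the intersection of two left ideals is again a left ideal — concretely $SQ\subseteq S(Sa)\cap S(aS)\subseteq Sa\cap aS=Q$ — the set $Q$ is a nonempty left ideal and therefore a quasi-ideal; equivalently $SQ\cap QS\subseteq SQ\subseteq Q$.

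There is no serious obstacle here. The only points requiring a little care are the nonemptiness of $Sa\cap aS$, where one should use $a^2$ rather than $a$ (since $a\in aS$ is not guaranteed when $e$ is merely a left identity), and the correct single application of \eqref{e4} to obtain $S\cdot aS\subseteq aS$; everything else is immediate from the one-sided ideal properties already recorded for $Sa$ and $Sa^2$.
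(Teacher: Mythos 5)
Your proof is correct and follows essentially the same route as the paper: the paper likewise establishes $S(Sa)\subseteq Sa$ and $(Sa^{2})S\subseteq Sa^{2}$ and, for $Sa\cap aS$, bounds $S(Sa\cap aS)\cap(Sa\cap aS)S$ by distributing over the intersection, using exactly the facts $S(aS)\subseteq aS$ and $(aS)S\subseteq Sa$ that you invoke. Your repackaging of $Sa\cap aS$ as the intersection of the two left ideals $Sa$ and $aS$, together with the explicit nonemptiness check via $a^{2}\in Sa\cap aS$ (a point the paper glosses over), is a marginally cleaner presentation of the same argument.
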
 
\begin{proof} Indeed, using \eqref{e1} and \eqref{e4}, for every $a\in S$ we get
\[
\arraycolsep=.5mm
\begin{array}{rll}
S(Sa\cap aS)\cap(Sa\cap aS)S&=&S(Sa)\cap S(aS)\cap(Sa)S\cap (aS)S\\
&=&S(Sa)\cap aS\cap(Sa)S\cap Sa\\
&\subseteq&Sa\cap aS , 
\end{array}
\]
which shows that $Sa\cap aS$ is a quasi-ideal. Moreover,
$$
S(Sa)\cap(Sa)S\subseteq S(Sa)=SS\cdot Sa\overset{\eqref{e5}}{=}aS\cdot
SS=aS\cdot S\overset{\eqref{e1}}{=}SS\cdot a=Sa 
$$
and
$$
S(Sa^2)\cap(Sa^2)S\subseteq (Sa^2)S=Sa^{2}\cdot S^2\overset{\eqref{e5}}{=}S^2\cdot a^2S=
S\cdot a^2S\overset{\eqref{e4}}{=}a^2\cdot SS\overset{\eqref{e5}}{=}SS\cdot a^{2}=Sa^2.
$$
Hence $Sa$ and $Sa^2$ are quasi-ideals of $S$.
\end{proof}

From the above proof we obtain 

\begin{corollary}\label{C4.2}
In a unitary $AG$-groupoid $S$ we have
$
Sa\cdot Sa=Sa^2=Sa^2\cdot S
$
for every $a\in S$.
\end{corollary}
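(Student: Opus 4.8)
The plan is to obtain each of the two equalities almost for free: one is already buried in the proof of Proposition~\ref{P4.1}, and the other is a single application of the medial law. The only structural fact I rely on beyond those identities is that a unitary $AG$-groupoid is surjective, i.e.\ $S=S^2$; this holds because the left identity $e$ gives $S=eS\subseteq S^2\subseteq S$. I will use $S=S^2$ freely to collapse repeated copies of $S$.

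First I would settle the equality $Sa^2\cdot S=Sa^2$. This is exactly the computation performed inside the proof of Proposition~\ref{P4.1}, where the chain $(Sa^2)S=Sa^2\cdot S^2\overset{\eqref{e5}}{=}S^2\cdot a^2S=S\cdot a^2S\overset{\eqref{e4}}{=}a^2\cdot SS\overset{\eqref{e5}}{=}SS\cdot a^2=Sa^2$ shows $Sa^2\cdot S=Sa^2$. Hence nothing new is needed here: I simply quote that line.

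Next I would establish $Sa\cdot Sa=Sa^2$. Applying the medial law \eqref{e2} to the product $(Sa)(Sa)$, matching the pattern $ab\cdot cd=ac\cdot bd$ with $a,c\mapsto S$ and $b,d\mapsto a$, gives $Sa\cdot Sa=SS\cdot aa=S^2a^2$, and since $S=S^2$ this is $Sa^2$. Splicing this together with the previous step yields the full chain $Sa\cdot Sa=Sa^2=Sa^2\cdot S$, which is the assertion.

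I do not anticipate any real obstacle, since each equality reduces to one invocation of an $AG$-groupoid identity together with the idempotence $S=S^2$. The only point deserving a moment's care is the bracketing in the medial-law step: one must read $(Sa)(Sa)$ as a product of the two blocks $Sa$ and $Sa$, so that \eqref{e2} applies in the form $SS\cdot aa$ rather than to some other grouping.
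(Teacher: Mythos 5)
Your proposal is correct and matches the paper's intent: the corollary is stated with "from the above proof we obtain", and the equality $Sa^2\cdot S=Sa^2$ is precisely the chain already displayed in the proof of Proposition~\ref{P4.1}, while $Sa\cdot Sa=SS\cdot aa=S^2a^2=Sa^2$ via the medial law and $S=S^2$ is the same one-line computation the paper uses elsewhere (e.g.\ in Theorems~\ref{T3.10} and~\ref{T3.13}). Nothing is missing.
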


\begin{proposition}\label{P4.3}
Quasi-ideals of a unitary $AG$-groupoid are semiprime.
\end{proposition}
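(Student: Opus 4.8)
The plan is to let $Q$ be a quasi-ideal and take an arbitrary $a\in S$ with $a^{2}\in Q$; the goal is to deduce $a\in Q$. Since a quasi-ideal satisfies $SQ\cap QS\subseteq Q$ by definition, it suffices to exhibit $a$ simultaneously as an element of $SQ$ and of $QS$. The whole argument will therefore reduce to finding two convenient ways of writing $a$ as a product in which $a^{2}$ appears as one of the two factors.

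First I would invoke Lemma \ref{L3.4}, which is available because throughout this section the groupoid is taken to be unitary and intra-regular. Part $(iv)$ gives $a=(a\cdot x^{2}y^{2})a^{2}$ for suitable $x,y\in S$, so that $a^{2}$ occurs as the \emph{right} factor; part $(v)$ gives $a=a^{2}\cdot(ay^{2}\cdot x^{2})$, so that $a^{2}$ occurs as the \emph{left} factor. These are exactly the two shapes needed.

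Next, using $a^{2}\in Q$, the first representation displays $a$ as (an element of $S$)$\,\cdot\,$(an element of $Q$), whence $a\in SQ$, while the second displays $a$ as (an element of $Q$)$\,\cdot\,$(an element of $S$), whence $a\in QS$. Combining these, $a\in SQ\cap QS\subseteq Q$, so $a\in Q$. As $a$ was arbitrary with $a^{2}\in Q$, the quasi-ideal $Q$ is semiprime.

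The only real decision in the proof is choosing the correct two identities from Lemma \ref{L3.4}: one with the square on the right to land $a$ in $SQ$, and one with the square on the left to land $a$ in $QS$. No manipulation of $Q$ beyond its defining inclusion is required, and intra-regularity (through Lemma \ref{L3.4}) carries the argument. It is worth noting that intra-regularity is genuinely needed here and not merely convenient: by Corollary \ref{C3.9} the semiprimeness of all right ideals — and hence of all quasi-ideals, since every right ideal is a quasi-ideal — already forces a unitary $AG$-groupoid to be intra-regular, so the statement cannot hold for unitary $AG$-groupoids outside the intra-regular setting of this section.
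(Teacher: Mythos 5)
Your proof is correct and essentially identical to the paper's: the paper also concludes $a\in SQ\cap QS\subseteq Q$ directly from Lemma \ref{L3.4}, using parts $(vii)$ and $(v)$ where you use $(iv)$ and $(v)$ — an immaterial difference. Your closing remark is also well taken: the intra-regularity hypothesis, omitted from the statement but used by both proofs through Lemma \ref{L3.4}, is genuinely necessary in view of Corollary \ref{C3.9}.
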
 
\begin{proof} In fact, if $Q$ is a quasi-ideal of $S$ and $a^2\in Q$, then by Lemma \ref{L3.4} $(vii)$ and $(v)$ we see that $a\in SQ\cap QS\subseteq Q$.
\end{proof}

\begin{definition}\rm 
A fuzzy subset $f$ of an $AG$-groupoid $S$ is called an \textit{$(\in,\in \! \vee q_{k})$-fuzzy} \textit{quasi-ideal} of $S$ if $f\geq (S\circ f)\wedge_k (f\circ S)$.
\end{definition}
 
As a simple consequence of the transfer principle for fuzzy subsets (cf. \cite{kondo}) we have
\begin{proposition}\label{P4.5}
A fuzzy subset $f$ of an $AG$-groupoid $S$ is its $(\in,\in \! \vee q_{k})$-fuzzy quasi-ideal if and only if for all $0<t\leq\frac{1-k}{2}$ each nonempty level $U(f,t)$ is a quasi-ideal of $S$.
\end{proposition}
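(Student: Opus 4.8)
The plan is to unwind both notions at the level of the cut sets $U(f,t)$ and to read the defining inequality $f\geq (S\circ f)\wedge_k(f\circ S)$ pointwise, exactly in the spirit of the transfer principle of \cite{kondo}. Throughout I keep $0<t\leq\frac{1-k}{2}$ fixed, so that the truncation constant $\frac{1-k}{2}$ occurring inside $\wedge_k$ never pushes a value of interest below $t$; this is the device that lets the crisp definition $SQ\cap QS\subseteq Q$ talk to the fuzzy one.

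For the forward implication I would assume $f$ is an $(\in,\in\!\vee q_{k})$-fuzzy quasi-ideal and fix a nonempty level $U(f,t)$. Given $a\in S\cdot U(f,t)\cap U(f,t)\cdot S$, I write $a=sq_{1}=q_{2}s'$ with $q_{1},q_{2}\in U(f,t)$ and $s,s'\in S$. Since $f(q_{1}),f(q_{2})\geq t$ and $t\leq\frac{1-k}{2}$, the factorization $a=sq_{1}$ forces $(S\circ f)(a)\geq t$ and the factorization $a=q_{2}s'$ forces $(f\circ S)(a)\geq t$; intersecting with $\frac{1-k}{2}$ still leaves the value at least $t$, so $f(a)\geq\bigl((S\circ f)\wedge_k(f\circ S)\bigr)(a)\geq t$. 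Hence $a\in U(f,t)$, which shows $U(f,t)$ is a quasi-ideal.

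For the converse I would argue by contradiction. Suppose some $a$ satisfies $f(a)<\bigl((S\circ f)\wedge_k(f\circ S)\bigr)(a)=:t_{0}$, and note $0<t_{0}\leq\frac{1-k}{2}$ because of the outer $\wedge_k$. I then choose any $t$ with $f(a)<t<t_{0}$. Since $(S\circ f)(a)\geq t_{0}>t$, the supremum defining $S\circ f$ produces a factorization $a=pq$ with $f(q)>t$, so $q\in U(f,t)$ and $a\in S\cdot U(f,t)$; symmetrically $(f\circ S)(a)>t$ gives $a\in U(f,t)\cdot S$. As $U(f,t)$ is then nonempty, hence a quasi-ideal by hypothesis, we get $a\in U(f,t)$, i.e. $f(a)\geq t$, contradicting $t>f(a)$. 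This yields $f\geq(S\circ f)\wedge_k(f\circ S)$.

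The one delicate point will be the non-attainment of the suprema in $S\circ f$ and $f\circ S$: this is precisely why in the converse I pick $t$ strictly below the threshold $t_{0}$ rather than equal to it, which guarantees an actual factorization realizing a value exceeding $t$. Everything else is bookkeeping with the constant $\frac{1-k}{2}$, and restricting to $t\leq\frac{1-k}{2}$ makes that bookkeeping harmless.
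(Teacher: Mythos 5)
Your argument is correct, and it is essentially the paper's approach: the paper offers no explicit proof, simply invoking the transfer principle of \cite{kondo}, and your level-set computation is precisely the instantiation of that principle for the inequality $f\geq(S\circ f)\wedge_k(f\circ S)$. In particular your two key devices --- restricting to $t\leq\frac{1-k}{2}$ so the truncation is harmless, and choosing $t$ strictly below the threshold in the converse to extract an actual factorization from the supremum --- are exactly what the cited principle packages up.
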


\begin{proposition}\label{P4.6}
A nonempty subset $Q$ of an $AG$-groupoid $S$ is its quasi-ideal if and only if $\left(C_{Q}\right)_{k}$ is an $(\in,\in\!\vee q_{k})$-fuzzy quasi-ideal of $S$.
\end{proposition}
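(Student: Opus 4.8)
The plan is to imitate the characteristic-function arguments behind Proposition \ref{P2.5}, using Lemma \ref{L2.7}(iii) together with one preliminary observation: since the outer operation in the definition of an $(\in,\in\!\vee q_{k})$-fuzzy quasi-ideal is $\wedge_k$, which truncates every value at $\frac{1-k}{2}$, the ordinary product and the truncated product $\circ_k$ coincide in this context. Indeed, for any fuzzy subset $g$ one checks $\min\{(S\circ g)(a),\frac{1-k}{2}\}=\min\{(S\circ_k g)(a),\frac{1-k}{2}\}$, and symmetrically on the right, so I may treat $S$ as the characteristic fuzzy set and apply Lemma \ref{L2.7}(iii) to read off $S\circ (C_Q)_k=(C_{SQ})_k$ and $(C_Q)_k\circ S=(C_{QS})_k$ up to this truncation.

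For the forward direction I would assume $SQ\cap QS\subseteq Q$ and verify $(C_Q)_k\geq (S\circ (C_Q)_k)\wedge_k ((C_Q)_k\circ S)$ pointwise. At $a\in Q$ the left side is at least $\frac{1-k}{2}$ while the right side is at most $\frac{1-k}{2}$, so the inequality is immediate. At $a\notin Q$ we have $(C_Q)_k(a)=0$, and the right side is positive only if $a$ admits a factorization with right factor in $Q$ (so $a\in SQ$) and also one with left factor in $Q$ (so $a\in QS$); the quasi-ideal hypothesis then gives $a\in SQ\cap QS\subseteq Q$, contradicting $a\notin Q$. Hence the right side vanishes and the inequality holds.

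For the converse I would assume the defining inequality for $(C_Q)_k$ and take any $a\in SQ\cap QS$. Factoring $a=pq$ with $q\in Q$ forces $(S\circ (C_Q)_k)(a)\geq (C_Q)_k(q)\geq\frac{1-k}{2}$, and factoring $a=q's$ with $q'\in Q$ forces $((C_Q)_k\circ S)(a)\geq\frac{1-k}{2}$; thus the right-hand side equals $\frac{1-k}{2}$ at $a$, and the inequality yields $(C_Q)_k(a)\geq\frac{1-k}{2}>0$, i.e. $a\in Q$. Therefore $SQ\cap QS\subseteq Q$, as required.

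The one delicate point is the truncation bookkeeping: the value of $(C_Q)_k$ on $Q$ is only specified to lie in $[\frac{1-k}{2},1]$ and $S$ takes the value $1$ rather than $\frac{1-k}{2}$, so I must confirm these choices never affect the final comparison, which is precisely what the outer $\wedge_k$ guarantees. If one prefers to avoid composition computations altogether, Proposition \ref{P4.5} gives a one-line alternative: for every $0<t\leq\frac{1-k}{2}$ the level $U((C_Q)_k,t)$ is exactly $Q$, so $(C_Q)_k$ is an $(\in,\in\!\vee q_{k})$-fuzzy quasi-ideal if and only if $Q$ is a quasi-ideal, proving both directions at once.
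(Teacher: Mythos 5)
Your argument is correct and follows the route the paper intends: Proposition \ref{P4.6} is stated without an explicit proof, being left as the standard characteristic-function/transfer-principle argument (as with Proposition \ref{P2.5}), and your direct pointwise verification of $\left(C_{Q}\right)_{k}\geq (S\circ \left(C_{Q}\right)_{k})\wedge_k(\left(C_{Q}\right)_{k}\circ S)$, including the careful handling of the $\wedge_k$ truncation, fills that in accurately. Your closing remark that Proposition \ref{P4.5} settles the matter at once, because every nonempty level $U(\left(C_{Q}\right)_{k},t)$ with $0<t\leq\frac{1-k}{2}$ equals $Q$, is precisely the intended one-line proof.
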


\begin{corollary}\label{C4.7}
Any $(\in,\in \vee q_{k})$-fuzzy left ideal of an $AG$-groupoid is its
$(\in,\in \vee q_{k})$-fuzzy quasi-ideal.
\end{corollary}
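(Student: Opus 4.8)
The plan is to prove the stronger pointwise inequality $f\geq S\circ_k f$ and then observe that the quasi-ideal condition follows for free, since the min on the right-hand side of the definition is dominated by either factor. So the whole argument reduces to one inequality involving only the left-ideal hypothesis.

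First I would invoke Proposition \ref{P2.2}$(ii)$, which says that $f$ being an $(\in,\in\!\vee q_k)$-fuzzy left ideal is equivalent to $f(xy)\geq f_k(y)$ for all $x,y\in S$. Next I would unwind the product $S\circ_k f$ at an arbitrary $a\in S$. If $a$ admits no factorization, then $(S\circ_k f)(a)=0\leq f(a)$ and there is nothing to check. If $a=pq$, then, because $S(p)=1$, the term $S(p)\wedge_k f(q)=\min\{1,\,f(q),\,\tfrac{1-k}{2}\}=f_k(q)$, so that $(S\circ_k f)(a)=\bigvee_{a=pq}f_k(q)$.

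The key step is then immediate: for each factorization $a=pq$ the left-ideal inequality gives $f(a)=f(pq)\geq f_k(q)$, and taking the supremum over all such factorizations yields $f(a)\geq(S\circ_k f)(a)$. Hence $f\geq S\circ_k f$ pointwise. Finally, since $(S\circ_k f)\wedge_k(f\circ_k S)\leq S\circ_k f$ everywhere, we conclude $f\geq S\circ_k f\geq(S\circ_k f)\wedge_k(f\circ_k S)$, which is exactly the defining inequality of an $(\in,\in\!\vee q_k)$-fuzzy quasi-ideal.

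I do not expect any genuine obstacle here; the proof is essentially a one-line reduction once the hypothesis is rewritten via Proposition \ref{P2.2}$(ii)$. The only point requiring care is the bookkeeping with the truncation at $\tfrac{1-k}{2}$ hidden inside $\wedge_k$ and $\circ_k$, namely verifying that $S(p)\wedge_k f(q)$ collapses to $f_k(q)$ (and not to $f(q)$), so that the supremum one compares against is genuinely $\bigvee_{a=pq}f_k(q)$ and matches the right-hand side of the left-ideal characterization.
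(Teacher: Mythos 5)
Your argument is correct. The key inequality $f(pq)\geq f_k(q)$ from Proposition~\ref{P2.2}$(ii)$, applied to every factorization $a=pq$ and followed by a supremum, does give $f(a)\geq\bigvee_{a=pq}f_k(q)=(S\circ_k f)(a)$, and your bookkeeping with the truncation is sound: since $\min\{\,\cdot\,,\tfrac{1-k}{2}\}$ commutes with suprema, one has $(S\circ f)\wedge_k(f\circ S)\leq\min\{(S\circ f),\tfrac{1-k}{2}\}=S\circ_k f\leq f$, so the conclusion holds whether the product in the definition of an $(\in,\in\!\vee q_k)$-fuzzy quasi-ideal is read as $\circ$ or as $\circ_k$. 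The paper states this corollary without proof; the route suggested by its placement is the transfer principle: by Proposition~\ref{P2.3} every nonempty level $U(f,t)$ with $0<t\leq\tfrac{1-k}{2}$ is a left ideal, every left ideal is (crisply) a quasi-ideal, and Proposition~\ref{P4.5} converts this back into the fuzzy statement. Your direct computation buys self-containedness and actually proves the stronger pointwise bound $f\geq S\circ_k f$ (the fuzzy analogue of $SQ\subseteq Q$), while the level-set route is shorter given the machinery already in place and adapts mechanically to the other ideal types considered in the paper. Either proof is acceptable.
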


\begin{theorem}\label{T4.8}
For a unitary $AG$-groupoid the following are equivalent.

$\;\;\,(i)$ \ $S$ is intra-regular.

$\;\,(ii)$  \ $Sa\subseteq Sa^2$ for every $a\in S$.

$(iii)$ \ $I\cap J\subseteq IJ$ for all quasi-ideals of $S$.

$\,(iv)$ \ $f\wedge_{k}g\leq f\circ_{k}g$ for all $(\in,\in \vee q)$-fuzzy quasi-ideals of $S$.
\end{theorem}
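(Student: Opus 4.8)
The plan is to prove the cyclic chain $(i)\Rightarrow(iv)\Rightarrow(iii)\Rightarrow(ii)\Rightarrow(i)$, handling the three short links first and reserving the real work for $(i)\Rightarrow(iv)$.

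For $(iv)\Rightarrow(iii)$ I would argue through characteristic functions: if $I,J$ are quasi-ideals, then $(C_I)_k$ and $(C_J)_k$ are $(\in,\in\!\vee q_k)$-fuzzy quasi-ideals by Proposition \ref{P4.6}, so $(iv)$ yields $(C_I)_k\wedge_k(C_J)_k\leq (C_I)_k\circ_k(C_J)_k$; rewriting both sides with Lemma \ref{L2.7} gives $(C_{I\cap J})_k\leq (C_{IJ})_k$, i.e.\ $I\cap J\subseteq IJ$. For $(iii)\Rightarrow(ii)$, note that $Sa$ is a quasi-ideal by Proposition \ref{P4.1}, so taking $I=J=Sa$ in $(iii)$ and using $Sa\cdot Sa=Sa^2$ from Corollary \ref{C4.2} gives $Sa=Sa\cap Sa\subseteq Sa\cdot Sa=Sa^2$. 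For $(ii)\Rightarrow(i)$, since $e$ is a left identity we have $a=ea\in Sa\subseteq Sa^2$ for every $a\in S$, and Theorem \ref{T3.8} then forces $S$ to be intra-regular.

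The substantive step is $(i)\Rightarrow(iv)$. Here I would try to factor each $a\in S$ as $a=pq$ in such a way that $f(p)\geq f_k(a)$ and $g(q)\geq g_k(a)$; granting this, $(f\circ_k g)(a)\geq f(p)\wedge_k g(q)\geq f_k(a)\wedge_k g_k(a)=(f\wedge_k g)(a)$, which is exactly $(iv)$. The mechanism for producing the two lower bounds is the quasi-ideal inequality $h\geq (S\circ h)\wedge_k(h\circ S)$: if an element $c$ can be written \emph{both} as $c=ua$ and as $c=av$ with $u,v\in S$ (that is, $c\in Sa\cap aS$), then $(S\circ h)(c)\geq h_k(a)$ and $(h\circ S)(c)\geq h_k(a)$, whence $h(c)\geq h_k(a)$. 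So it suffices to write $a=pq$ with $p,q\in Sa\cap aS$.

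Lemma \ref{L3.4}$(i)$ supplies exactly such a factorization, namely $a=a\cdot za$, so I would take $p=a$ and $q=za$. That $p=a\in Sa\cap aS$ is clear: $a=ea\in Sa$, and $a=a(za)\in aS$ again by Lemma \ref{L3.4}$(i)$. For $q=za$ one has $za\in Sa$ trivially, while substituting $a=a\cdot za$ into $za=z\cdot a$ and applying \eqref{e4} gives $za=z(a\cdot za)=a(z\cdot za)\in aS$. Thus both factors lie in $Sa\cap aS$, the quasi-ideal inequalities give $f(p)=f(a)\geq f_k(a)$ and $g(q)\geq g_k(a)$, and $(iv)$ follows. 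I expect the only real obstacle to be precisely this last point: the quasi-ideal condition, unlike that for a left or right ideal, couples $S\circ h$ with $h\circ S$ and can therefore be exploited only at points that are simultaneously a left and a right multiple of $a$, so the crux is recognizing this and extracting from Lemma \ref{L3.4} a factorization of $a$ whose two factors both have that two-sided form, the verification $za\in aS$ being the one computation that is not immediate.
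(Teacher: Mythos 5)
Your proposal is correct and follows essentially the same route as the paper: the three easy implications $(iv)\Rightarrow(iii)\Rightarrow(ii)\Rightarrow(i)$ are handled identically (via Proposition \ref{P4.6}, Lemma \ref{L2.7}, Proposition \ref{P4.1}, Corollary \ref{C4.2} and Theorem \ref{T3.8}), and for $(i)\Rightarrow(iv)$ you use the factorization $a=a\cdot za$ from Lemma \ref{L3.4}$(i)$ where the paper uses the mirror-image $a=wa\cdot a$, in both cases the essential point being that the nontrivial factor lies in $Sa\cap aS$ so that the two-sided quasi-ideal inequality applies to it. Your explicit isolation of the principle that $c\in Sa\cap aS$ forces $h(c)\geq h_k(a)$ is just a cleaner packaging of the supremum computation the paper writes out in full.
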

\begin{proof}
$(i)\Longrightarrow(iv).$ \ Let $f$ and $g$ be $(\in,\in \vee q)$-fuzzy quasi-ideals of a unitary intra-regular $AG$-groupoid $S$. Then by Lemma \ref{L3.4}, 
$$
a=a\cdot za \ \ \ \ {\rm and } \ \ \ \ wa=a\cdot (w\cdot za)
$$ 
because $wa=ew\cdot(a\cdot za)=(za\cdot a)\cdot we=ea\cdot (w\cdot za)=a\cdot (w\cdot za)$. Hence,
\begin{align*}
\left(f\circ_{k}g\right)(a)&\geq \Big(\Big((S\circ f)\wedge_k(f\circ S)\Big)\circ_k\Big((S\circ g)\wedge_k(g\circ S)\Big)\Big)(a)\\
&={\displaystyle\bigvee\limits_{a=pq}}
\left\{\Big((S\circ f)\wedge_k(f\circ S)\Big)(p)\wedge_k\Big((S\circ g)\wedge_k(g\circ S)\Big)(q)\right\} \\
&={\displaystyle\bigvee\limits_{a=pq}}
\Big\{(S\circ f)(p)\wedge_k(f\circ S)(p)\wedge_k(S\circ g)(q)\wedge_k(g\circ S)(q)\Big\} \\
&\geq{\displaystyle\bigvee\limits_{a=wa\cdot a}}
\Big\{(S\circ f)(wa)\wedge_k(f\circ S)(wa)\wedge_k(S\circ g)(a)\wedge_k(g\circ S)(a)\Big\} \\ 
&={\displaystyle\bigvee\limits_{a=wa\cdot a}}
\Big\{(S\circ f)(wa)\wedge_k(f\circ S)(a(w\cdot za))\wedge_k(S\circ g)(ea)\wedge_k(g\circ S)(a\cdot za)\Big\} \\
&=\!\!\!{\displaystyle\bigvee\limits_{a=wa\cdot a}}
\Big\{(S(w)\!\wedge\! f(a)\!\wedge_k\! f(a)\!\wedge S(w\cdot za)\!\wedge_k\! S(e)\!\wedge\! g(a)\!\wedge_k\! g(a)\!\wedge \!S(a\cdot za)\Big\} \\
&=f(a)\wedge_k g(a)=(f\wedge_k g)(a) .
\end{align*}
Therefore $f\circ_{k}g\geq f\wedge_{k}g$.

\medskip 

$(iv)\Longrightarrow(iii).$ \ Let $a\in I\cap J$, where $I$ and $J$ are quasi-ideals. Then 
$$
(C_{IJ})_{k}(a)=(C_{I}\circ_{k}C_{J})(a)\geq (C_{I}\wedge_{k}C_{J})(a)=(C_{I\cap J})_{k}(a)\geq\frac{1-k}{2}
$$
by Lemma \ref{L2.7}. Thus, $a\in IJ$. Consequently, $I\cap J\subseteq IJ$ for any quasi-ideals of $S$.

\medskip

$(iii)\Longrightarrow(ii).$ By Proposition \ref{P4.1} and Corollary \ref{C4.2}.

\medskip

$(ii)\Longrightarrow(i).$ \ By Theorem \ref{T3.8}.
\end{proof}

\begin{corollary}\label{C3.4}
A unitary $AG$-groupoid $S$ is intra-regular if and only if one of the following conditions is satisfied;

\begin{enumerate}
\item[$(i)$] $Q^2=Q$ for all quasi-ideals of $S$,
\item[$(ii)$] $Sa=Sa^2$ for every $a\in S$,
\item[$(iii)$] $f_k\leq f\circ_k f$ for all $(\in,\in \vee q_{k})$-fuzzy quasi-ideals of $S$.
\end{enumerate}
\end{corollary}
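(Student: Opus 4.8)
The plan is to prove each of the three conditions equivalent to intra-regularity, using Theorem \ref{T4.8} and Theorem \ref{T3.8} as the common hub together with the structural facts already available. First I would record one containment that holds in \emph{every} unitary $AG$-groupoid and is not among the listed hypotheses: for each $a$ the set $Sa$ is a left ideal (this is exactly what is checked inside the proof of Theorem \ref{T3.10}, from $S\cdot Sa\subseteq Sa$), hence a subgroupoid, so by Corollary \ref{C4.2} one has $Sa^{2}=Sa\cdot Sa\subseteq Sa$ automatically. The same remark, applied to an arbitrary quasi-ideal $Q$, gives $QQ\subseteq SQ\cap QS\subseteq Q$, i.e. $Q^{2}\subseteq Q$ for free. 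These two ``cheap'' inclusions are what make the forward directions collapse.

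For condition $(ii)$, the equality $Sa=Sa^{2}$ is, in view of the free inclusion $Sa^{2}\subseteq Sa$, equivalent to $Sa\subseteq Sa^{2}$, and this is precisely the characterization of intra-regularity supplied by Theorem \ref{T4.8}. So $(ii)$ needs no further work. For condition $(i)$, the forward direction is a diagonalization of Theorem \ref{T4.8}$(iii)$: taking $I=J=Q$ gives $Q\subseteq Q^{2}$, which with $Q^{2}\subseteq Q$ yields $Q^{2}=Q$. For the converse I would feed the explicit quasi-ideal $Q=Sa$ (Proposition \ref{P4.1}) into the hypothesis: $Sa=(Sa)^{2}=Sa^{2}$ by Corollary \ref{C4.2}, and since $a=ea\in Sa=Sa^{2}$, Theorem \ref{T3.8} delivers intra-regularity.

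For condition $(iii)$, the forward direction is again a diagonalization, now of Theorem \ref{T4.8}$(iv)$: setting $g=f$ gives $f\wedge_{k}f\le f\circ_{k}f$, and since $(f\wedge_{k}f)(x)=\min\{f(x),\tfrac{1-k}{2}\}=f_{k}(x)$ one obtains $f_{k}\le f\circ_{k}f$. The converse, $(iii)\Rightarrow$ intra-regular, is the one step I expect to carry the actual content, and I would handle it by transferring from the fuzzy level back to elements through characteristic functions. Take $f=(C_{Sa})_{k}$, which is an $(\in,\in\!\vee q_{k})$-fuzzy quasi-ideal by Proposition \ref{P4.6}. Evaluating the hypothesis at $a$ and using $a\in Sa$ gives $(f\circ_{k}f)(a)\ge f_{k}(a)=\tfrac{1-k}{2}>0$; by the definition of $\circ_{k}$ a positive value of this supremum forces an actual factorization $a=pq$ with $f(p)\wedge_{k}f(q)>0$, hence $f(p),f(q)>0$, i.e. $p,q\in Sa$. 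Then $a=pq\in Sa\cdot Sa=Sa^{2}$ by Corollary \ref{C4.2}, and Theorem \ref{T3.8} finishes.

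The main obstacle is precisely this last reading-off of an element factorization from the positivity of the sup defining $f\circ_{k}f$; everything else is bookkeeping (the identities $f\wedge_{k}f=f_{k}$ and $(Sa)^{2}=Sa^{2}$, and the two free inclusions $Q^{2}\subseteq Q$, $Sa^{2}\subseteq Sa$). No genuinely new idea beyond the earlier proofs is required, so I would present the corollary compactly as three short equivalences sharing the hub ``$a\in Sa^{2}$'' from Theorem \ref{T3.8}.
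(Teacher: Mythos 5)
Your proposal is correct and follows essentially the same route as the paper: the forward directions of $(i)$--$(iii)$ are obtained by diagonalizing Theorem \ref{T4.8} and combining with the ``free'' inclusions $Q^{2}\subseteq SQ\cap QS\subseteq Q$ and $Sa^{2}=Sa\cdot Sa\subseteq Sa$, exactly as in the paper's argument. The paper dismisses the converses and item $(iii)$ with ``the rest is clear''; your explicit treatment of these (feeding $Q=Sa$ and $f=(C_{Sa})_{k}$ back through Corollary \ref{C4.2} and Theorem \ref{T3.8}) is the intended completion and is sound.
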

\begin{proof}
By Theorem \ref{T4.8} in unitary intra-regular $AG$-groupoid $S$ for every quasi-ideal $Q$ we have $Q\subseteq Q^2$. On the other hand, $Q^2=Q^2\cap Q^2\subseteq SQ\cap QS\subseteq Q$. So, $Q^2=Q$. Similarly, 
$Sa\subseteq Sa^2=Sa\cdot Sa\subseteq S^2\cdot Sa=aS\cdot S^2=Sa$ by Corollary \ref{C4.2}.
 
 The rest is clear.
 \end{proof}
\section{Conclusions}

Unitary intra-regular $AG$-groupoids can be characterized by the properties of their left ideals, bi-ideals and quasi-ideals. A crucial role in this characterization play ideals of the form $Sa$ and $Sa^2$. Seems that future research work should focus on understanding the role played by subsets $Sa$ and $Sa^2$ in the theory of all $AG$-groupoids and not just in intra-regular $AG$-groupoids.

\end{document}